\font \sevenrm=cmr7
\font \fiverm=cmr5
\newcommand{\nc}{\newcommand}
\newtheorem{theorem}{Theorem}
\newtheorem{definition}{Definition}
\newtheorem{proposition}{Proposition}
\newtheorem{ex}{Example}
\newtheorem{remark}{Remark}
\nc{\comment}[1]{[[{\tt #1}]] }
\nc{\Cal}[1]{{\mathcal {#1}}}
\nc{\mop}[1]{\mathop{\hbox {\rm #1} }\nolimits}
\nc{\gmop}[1]{\mathop{\hbox {\bf #1} }\nolimits}
\nc{\smop}[1]{\mathop{\hbox {\sevenrm #1} }\nolimits}
\nc{\ssmop}[1]{\mathop{\hbox {\fiverm #1} }\nolimits}
\nc{\mopl}[1]{\mathop{\hbox {\rm #1} }\limits}
\nc{\smopl}[1]{\mathop{\hbox {\sevenrm #1} }\limits}
\nc{\ssmopl}[1]{\mathop{\hbox {\fiverm #1} }\limits}
\nc{\frakg}{{\frak g}}
\nc{\g}[1]{{\frak {#1}}}
\def \restr#1{\mathstrut_{\textstyle |}\raise-6pt\hbox{$\scriptstyle #1$}}
\def \srestr#1{\mathstrut_{\scriptstyle |}\hbox to
-1.5pt{}\raise-4pt\hbox{$\scriptscriptstyle #1$}}
\nc{\wt}{\widetilde} \nc{\wh}{\widehat}
\nc{\redtext}[1]{\textcolor{red}{#1}}
\nc{\bluetext}[1]{\textcolor{blue}{#1}}
\nc\fleche[1]{\mathop{\hbox to #1 mm{\rightarrowfill}}\limits}
\nc{\ignore}[1]{}
\def\semi{\mathrel{\times}\kern -.85pt\joinrel\mathrel{\raise
1.4pt\hbox{${\scriptscriptstyle |}$}}}
\nc\R{{\mathbb R}}
\nc\N{{\mathbb N}}
\nc\inver{^{-1}}
\nc\point{\hbox{\bf .}}
\nc\un{\hbox{\bf 1}}
\def\graphearetenor{\,{\scalebox{0.25}{
\begin{picture}(130,6) (175,-221)
\SetWidth{3.0}
\SetColor{Black}
\Line(176,-218)(304,-218)
\end{picture}
}}\,}
\def\graphecroi{\,{\scalebox{0.20}{
\begin{picture}(672,54) (05,-131)
    \SetWidth{3.0}
    \SetColor{Black}
    \Line(20,-48)(686,-160)
    \Line(16,-160)(673,-48)
  \end{picture}  }}\,}
\def\graphecroipetit{\,{\scalebox{0.20}{
\begin{picture}(482,60) (95,-183)
    \SetWidth{3.0}
    \SetColor{Black}
    \Line(96,-116)(576,-178)
    \Line(96,-192)(574,-114)
\end{picture} }}\,}
\def\graphearetenorcroix{\,{\scalebox{0.25}{
\begin{picture}(130,34) (95,-143)
\SetWidth{3.0}
\SetColor{Black}
\Line(96,-126)(224,-126)
\Line(144.002,-110.002)(175.998,-141.998)\Line(175.998,-110.002)(144.002,-141.998)
\end{picture}
}}\,}
\def\graphearetecurcroix{\,{\scalebox{0.25}{
\begin{picture}(129,22) (129,-200)
\SetWidth{3.0}
\SetColor{Black}
\Photon(130,-190)(257,-187){7.5}{6}
\Line(181.002,-198.998)(200.998,-179.002)\Line(181.002,-179.002)(200.998,-198.998)
\end{picture}
}}\,}
\def\graphearetecurcroixun{\,{\scalebox{0.25}{
\begin{picture}(130,90) (111,-177)
\SetWidth{3.0}
\SetColor{Black}
\Photon(112,-164)(240,-164){7.5}{6}
\Line(191.998,-179.998)(160.002,-148.002)\Line(160.002,-179.998)(191.998,-148.002)
\Text(176,-212)[lb]{\Huge{\Black{$1$}}}
\end{picture}
}}\,}
\def\graphearetenorcroixzero{\,{\scalebox{0.25}{
\begin{picture}(130,34) (95,-143)
\SetWidth{3.0}
\SetColor{Black}
\Line(96,-126)(224,-126)
\Line(144.002,-110.002)(175.998,-141.998)\Line(175.998,-110.002)(144.002,-141.998)
\Text(156,-172)[lb]{\Huge{\Black{$0$}}}
\end{picture}
}}\,}
\def\graphearetenorcroixun{\,{\scalebox{0.25}{
\begin{picture}(130,34) (95,-143)
\SetWidth{3.0}
\SetColor{Black}
\Line(96,-126)(224,-126)
\Line(144.002,-110.002)(175.998,-141.998)\Line(175.998,-110.002)(144.002,-141.998)
\Text(156,-172)[lb]{\Huge{\Black{$1$}}}
\end{picture}
}}\,}
\def\graphearetecur{\,{\scalebox{0.25}{
\begin{picture}(133,20) (104,-209)
\SetWidth{3.0}
\SetColor{Black}
\Photon(105,-201)(236,-197){7.5}{7}
\end{picture}
}}\,}
\def\graphev{\,{\scalebox{0.25}{
\begin{picture}(66,34) (191,-191)
    \SetWidth{3.0}
    \SetColor{Black}
    \Line(192,-174)(224,-174)
    \Line(224,-174)(256,-158)
    \Line(224,-174)(256,-190)
\end{picture}}}\,}
\def\graphevcur{\,{\scalebox{0.15}{
\begin{picture}(160,166) (97,-122)
\SetWidth{3.0}
\SetColor{Black}
\Photon(98,-99)(195,-97){7.5}{5}
\Line(195,-99)(256,-37)
\Line(196,-100)(255,-161)
\end{picture}
}}\,}
\def\grapheexmdel{\,{\scalebox{0.25}{
\begin{picture}(162,52) (175,-209)
\SetWidth{3.0}
\SetColor{Black}
\Arc(256,-222)(16,270,630)
\Arc(256,-192)(34,-61.928,241.928)
\Line(288,-190)(336,-190)
\Line(224,-190)(176,-190)
\end{picture}
}}\,}
\def\graphecerc{\,{\scalebox{0.25}{
\begin{picture}(130,34) (191,-287)
\SetWidth{3.0}
\SetColor{Black}
\Arc(256,-270)(16,270,630)
\Line(272,-270)(320,-270)
\Line(240,-270)(192,-270)
\end{picture}
}}\,}
\def\graphecerccroizero{\,{\scalebox{0.25}{
\begin{picture}(162,63) (175,-101)
\SetWidth{3.0}
\SetColor{Black}
\Arc(256,-75)(35.777,243,603)
\Line(288,-75)(336,-75)
\Line(224,-75)(176,-75)
\Line(240.002,-122.998)(271.998,-91.002)\Line(240.002,-91.002)(271.998,-122.998)
\Text(256,-139)[lb]{\Huge{\Black{$0$}}}
\end{picture}
}}\,}
\def\graphecerccroiun{\,{\scalebox{0.25}{
\begin{picture}(162,63) (175,-101)
\SetWidth{3.0}
\SetColor{Black}
\Arc(256,-75)(35.777,243,603)
\Line(288,-75)(336,-75)
\Line(224,-75)(176,-75)
\Line(240.002,-122.998)(271.998,-91.002)\Line(240.002,-91.002)(271.998,-122.998)
\Text(256,-139)[lb]{\Huge{\Black{$1$}}}
\end{picture}
}}\,}
\def\graphedeltasansindice{\,{\scalebox{0.20}{
\begin{picture}(226,68) (111,-161)
    \SetWidth{3.0}
    \SetColor{Black}
    \Arc(224,-142)(48,180,540)
    \Line(192,-110)(192,-174)
    \Line(240,-94)(240,-190)
    \Line(272,-142)(336,-142)
    \Line(176,-142)(112,-142)
\end{picture}
}}\,}
\def\graphedeltasansindiced{\,{\scalebox{0.20}{
\begin{picture}(124,52) (201,-165)
    \SetWidth{3.0}
    \SetColor{Black}
    \Arc(280,-154)(36.878,49.399,282.529)
    \Line(240,-158)(192,-158)
    \Line(256,-126)(256,-190)
\end{picture}
}}\,}
\def\graphedeltacontrau{\,{\scalebox{0.20}{
\begin{picture}(194,46) (191,-177)
    \SetWidth{3.0}
    \SetColor{Black}
    \Arc(288,-164)(32,180,540)
    \Line(320,-164)(384,-164)
    \Line(256,-164)(192,-164)
\end{picture}
}}\,}
\def\graphedeltacontrat{\,{\scalebox{0.20}{
\begin{picture}(173,70) (178,-180)
    \SetWidth{3.0}
    \SetColor{Black}
    \Arc(268,-165)(34.482,210,570)
    \Line(303,-171)(350,-171)
    \Line(233,-171)(179,-171)
    \Line(280,-133)(280,-197)
\end{picture}
  }}\,}
\def\graphedeltacontratddd{\,{\scalebox{0.20}{
\begin{picture}(96,57) (336,-297)
    \SetWidth{3.0}
    \SetColor{Black}
\Arc(382.723,-358.862)(114.921,68.415,112.902)
    \Arc(379,-191.833)(131.078,-108.688,-67.576)
    \Line(354,-248)(354,-320)
    \Line(412,-249)(415,-317)
\end{picture}
  }}\,}
\def\graphedeltacontratdddd{\,{\scalebox{0.20}{
\begin{picture}(168,29) (184,-180)
    \SetWidth{3.0}
    \SetColor{Black}
    \Arc(243,-176)(22.627,135,495)
    \Arc(288,-175)(22.627,135,495)
    \Line(312,-176)(351,-176)
    \Line(221,-177)(185,-177)
\end{picture}
  }}\,}
\def\graphedeltajdid{\,{\scalebox{0.20}{
\begin{picture}(153,66) (124,-279)
    \SetWidth{3.0}
    \SetColor{Black}
    \Arc[clock](250.949,-261.153)(42.881,-72.281,-290.548)
    \Line(208,-263)(155,-264)
    \Line(225,-230)(225,-293)
    \Line(252,-218)(252,-304)
\end{picture}}}\,}
\def\graphenonpi{\,{\scalebox{0.20}{
\begin{picture}(178,34) (143,-143)
    \SetWidth{3.0}
    \SetColor{Black}
    \Line(144,-126)(192,-126)
    \Arc(208,-126)(16,90,450)
    \Line(224,-126)(256,-126)
    \Arc(272,-126)(16,90,450)
    \Line(288,-126)(320,-126)
  \end{picture}}}\,}
 \def\qedj{\,{\scalebox{0.20}{ \begin{picture}(181,66) (218,-178)
    \SetWidth{3.0}
    \SetColor{Black}
    \PhotonArc(307.786,-179.786)(22.955,14.598,173.03){7.5}{3.5}
    \PhotonArc[clock](306,-170.142)(49.152,-175.498,-364.502){7.5}{8.5}
    \Line(219,-175)(398,-175)
  \end{picture}}}\,}
\def\qeda{\,{\scalebox{0.20}{
\begin{picture}(98,35) (271,-286)
    \SetWidth{3.0}
    \SetColor{Black}
    \PhotonArc(320,-277)(17.889,-26.565,206.565){7.5}{4.5}
    \Line(272,-285)(368,-285)
\end{picture}}}\,}
\def\qedcoiz{\,{\scalebox{0.20}
  { \begin{picture}(258,93) (191,-180)
    \SetWidth{3.0}
    \SetColor{Black}
    \PhotonArc(328,-187)(91.214,15.255,164.745){7.5}{12.5}
    \Line(192,-163)(448,-163)
    \Line(335.998,-178.998)(304.002,-147.002)\Line(304.002,-178.998)(335.998,-147.002)
    \Text(315,-210)[lb]{\Huge{\Black{$0$}}}
  \end{picture}}}\,}
\def\qedcoiu{\,{\scalebox{0.20}
  { \begin{picture}(258,93) (191,-180)
    \SetWidth{3.0}
    \SetColor{Black}
    \PhotonArc(328,-187)(91.214,15.255,164.745){7.5}{12.5}
    \Line(192,-163)(448,-163)
    \Line(335.998,-178.998)(304.002,-147.002)\Line(304.002,-178.998)(335.998,-147.002)
    \Text(315,-210)[lb]{\Huge{\Black{$1$}}}
  \end{picture}}}\,}
\def\grpqed{\,{\scalebox{0.20}{
\begin{picture}(194,74) (175,-160)
    \SetWidth{3.0}
    \SetColor{Black}
    \Arc(272,-150)(35.777,117,477)
    \Photon(176,-150)(240,-150){7.5}{3}
    \Photon(304,-150)(368,-150){7.5}{3}
    \Photon(256,-118)(256,-182){7.5}{3}
    \Photon(288,-118)(288,-182){7.5}{3}
\end{picture}}}\,}
\def\grpqeda{\,{\scalebox{0.20}{
  \begin{picture}(144,94) (200,-155)
    \SetWidth{3.0}
    \SetColor{Black}
    \Photon(201,-131)(257,-131){7.5}{3}
    \Line(256,-131)(343,-85)
    \Line(257,-133)(336,-177)
    \Photon(286,-116)(291,-149){7.5}{2}
    \Photon(321,-97)(320,-168){7.5}{4}
\end{picture}}}\,}
\def\grpqedb{\,{\scalebox{0.20}{
\begin{picture}(129,82) (191,-170)
    \SetWidth{3.0}
    \SetColor{Black}
    \Photon(192,-160)(257,-160){7.5}{3}
    \Line(257,-161)(319,-126)
    \Line(257,-160)(318,-206)
    \Photon(299,-139)(302,-190){7.5}{3}   
\end{picture}}}\,}
\def\grpqedc{\,{\scalebox{0.20}{
 \begin{picture}(134,52) (193,-148)
    \SetWidth{3.0}
    \SetColor{Black}
    \Arc(258,-143)(24.597,153,513)
    \Photon(194,-140)(235,-141){7.5}{2}
    \Photon(284,-142)(326,-139){7.5}{2}
\end{picture}}}\,}
\def\grpqedd{\,{\scalebox{0.20}{
\begin{picture}(128,48) (255,-198)
    \SetWidth{3.0}
    \SetColor{Black}
    \Arc(320,-195)(22.825,151,511)
    \Photon(321,-174)(322,-218){7.5}{2}
    \Photon(345,-196)(382,-193){7.5}{2}
    \Photon(297,-193)(256,-193){7.5}{2}
  \end{picture}}}\,}
\def\qedin{\,{\scalebox{0.20}{
   \begin{picture}(194,40) (127,-200)
    \SetWidth{3.0}
    \SetColor{Black}
    \Photon(128,-190)(192,-190){7.5}{3}
    \Photon(256,-190)(320,-190){7.5}{3}
    \Arc(224,-190)(32,270,630)
    \Line(175.998,-205.998)(144.002,-174.002)\Line(144.002,-205.998)(175.998,-174.002)
  \end{picture}}}\,}
\def\qedrefz{\,{\scalebox{0.20}{
  \begin{picture}(143,40) (201,-200)
    \SetWidth{3.0}
    \SetColor{Black}
    \Arc(273,-186)(22.204,144,504)
    \Line(269.001,-213.999)(278.999,-202.001)\Line(268.001,-203.001)(279.999,-212.999)
    \Text(276,-242)[lb]{\Huge{\Black{$0$}}}
    \Photon(202,-194)(250,-191){7.5}{2}
    \Photon(296,-190)(343,-191){7.5}{2}
  \end{picture}}}\,}
  \def\qedrefu{\,{\scalebox{0.20}{
    \begin{picture}(143,40) (201,-200)
    \SetWidth{3.0}
    \SetColor{Black}
    \Arc(273,-186)(22.204,144,504)
    \Line(269.001,-213.999)(278.999,-202.001)\Line(268.001,-203.001)(279.999,-212.999)
    \Text(276,-242)[lb]{\Huge{\Black{$1$}}}
    \Photon(202,-194)(250,-191){7.5}{2}
    \Photon(296,-190)(343,-191){7.5}{2}
  \end{picture}}}\,}
\def\diagramme #1{\vskip 4mm \centerline {#1} \vskip 4mm}
\begin{document}
\title{
{Bialgebra of specified graphs and external structures}}

\author{Dominique Manchon}
\address{Universit\'e Blaise Pascal,
       C.N.R.S.-UMR 6620,
        63177 Aubi\`ere, France}       
        \email{manchon@math.univ-bpclermont.fr}
        \urladdr{http://math.univ-bpclermont.fr/~manchon/}
         
\author{Mohamed Belhaj Mohamed}
\address{{Universit\'e Blaise Pascal,
         laboratoire de math\'ematiques UMR 6620,
         63177 Aubi\`ere, France.}\\
         {Laboratoire de math\'ematiques physique fonctions sp\'eciales et applications, universit\'e de sousse, rue Lamine Abassi 4011 H. Sousse,  Tunisie}}     
         \email{Mohamed.Belhaj@math.univ-bpclermont.fr}

\date{June, 17th 2013}
\noindent{\footnotesize{${}\phantom{a}$ }}
\begin{abstract}
We construct a Hopf algebra structure on the space of specified Feynman graphs of a quantum field theory. We introduce a convolution product and a semigroup of characters of this Hopf algebra with values in some suitable commutative algebra taking momenta into account. We then implement the renormalization described by A. Connes and D. Kreimer in \cite{A.D2000} and the Birkhoff decomposition for two renormalization schemes: the minimal subtraction scheme and the Taylor expansion scheme.
\end{abstract}
\maketitle
\textbf{MSC Classification}: 05C90, 81T15, 16T05, 16T10.

\textbf{Keywords}: Bialgebra, Hopf algebra, Feynman Graphs, Convolution product, Birkhoff decomposition.
\tableofcontents
\section{Introduction}
Hopf algebras of Feynman graphs have been studied by A. Connes and D. Kreimer in \cite{A.D2000}, \cite{ad98}, \cite{ad01} and \cite{dk98} as a powerful tool to explain the combinatorics of renormalization in quantum field theory. In this note we are interested in the Hopf algebra of specified Feynman graphs studied by A. Connes and D. Kreimer in \cite{A.D2000}.\\

In the first part we study the simpler case of Hopf algebras of locally one-particle irreducible ($1PI$) Feynman graphs, neglecting the specification at this stage. First we consider a theory of fields $\Cal T$ (for example $\varphi^3$ \cite{A.D2000}, $\varphi^4$ \cite{wvs}, $QED$ and $QCD$ \cite{wvs} , \cite{wvs06}...) which gives rise to Feynman graph types determined by $\Cal T$: the type of a vertex is determined by the type of half-edges that are adjacent to it. We then construct a structure of commutative bialgebra  $\wt{\Cal H}_{\Cal T}$ on the space of locally $1PI$ graphs of $\Cal T$. The coproduct is given by:
\begin{eqnarray*}
\Delta (\Gamma ) &=& \sum_{\substack{\gamma \subseteq \Gamma \\ \Gamma / \gamma \in \Cal T }}  \gamma \otimes \Gamma / \gamma,
\end{eqnarray*}
where the sum runs over all locally $1PI$ covering subgraphs of $\Gamma$ such that the contracted subgraph  $\Gamma/\gamma$ is in the theory $\Cal T$ (in other words, locally $1PI$ superficially divergent subgraphs \cite{SA}). The Hopf algebra ${\Cal H}_{\Cal T}$ is obtained by taking the quotient of the bialgebra $\wt{\Cal H}_{\Cal T}$ above by the ideal generated by $\un - \Gamma$, where the unit $\un$ is the empty graph and $\Gamma$ is a $1PI$ graph without internal edges. Then we introduce the specification: we are led by quantum field theory to distinguish between vertices of the same type. For example the list of vertices admitted in $\varphi^3$ theory and in $QED$ are respectively:
\vskip-1cm
\begin{eqnarray*}
\{ \graphearetenorcroixzero , \graphearetenorcroixun , \graphev  \} \;\;\;\text{and} \;\;\; \{ \graphearetenorcroixzero , \graphearetenorcroixun , \graphevcur , \graphearetecurcroixun \}.
\end{eqnarray*}

The contraction of a subgraph on a point rises a problem: For example in $\varphi^3$ theory, if we contract the subgraph $\graphecerc$ inside the graph $\grapheexmdel$, shall we get $\graphecerccroizero$ or $\graphecerccroiun$ ?\\

\hskip-2mmSimilarly for $QED$, does the contraction of $\qeda$ inside $\qedj$ give $\qedcoiz$ or $\qedcoiu$?
\vskip3mm
We will remedy to this by introducing the specified graph $\bar\Gamma = (\Gamma, \underline{i})$ where $\underline{i}$ is a multi-index which identifies the type of residue of $\Gamma$, that is to say, the vertices obtained by contracting each connected components onto a point. (For example $\mop {res} \graphedeltasansindiced = \graphev$). The formula for the coproduct of specified graphs is then given by: 
\begin{eqnarray*}
\Delta (\bar\Gamma ) &=& \sum_{{\bar\gamma \subseteq \bar\Gamma \atop \bar\Gamma / \bar\gamma \in \Cal T }} \bar \gamma \otimes \bar\Gamma / \bar\gamma,
\end{eqnarray*}
where the sum runs over all locally $1PI$ specified covering subgraphs $ \bar\gamma = (\gamma,\underline{j})$ of $ \bar\Gamma = (\Gamma,\underline{i})$ (see definition \ref{df1}), such that the contracted subgraph $(\Gamma/(\gamma,\underline{j}) , \underline{i})$ is in the theory $\Cal T$. Here $\underline{j}$ is a multi-index that identifies the residue of each of the connected components of $\gamma$. The Hopf algebra ${\Cal H}_{\Cal T}$ is again obtained by identifying the specified graphs without internal edges with the unit.\\

In the second part we are interested in external structures. Feynman rules associate to each graph a function which depends on moments associated with each  half edge of the graph, with the constraints $p_e + p_{e'} = 0$ for each internal edge $(e e')$ and $\sum_{e \in st(v)} p_e = 0$ for any vertex $v$, where $st(v)$ is the set of half-edges adjacent to $v$. Feynman rules $\Phi$ do depend on the refined types of vertices, but do not depend on the overall specification. Refined types of vertices combine themselves in a nice way: for example, in $\varphi^3$ theory, $\graphearetenorcroixzero$ and $\graphearetenorcroixun$ combine to an edge $\graphearetenor$, namely \cite{A.D2000}:
$$\Phi (\graphecerccroizero) + \Phi (\graphecerccroiun) = \Phi (\graphedeltacontrau).$$
Similary in $QED$ we have: 
$$\Phi (\qedin) = \Phi (\grpqedc),$$
and
$$\Phi (\qedrefz) + \Phi (\qedrefu) = \Phi (\grpqedc).$$
Considering the relations above we could have chosen only one type of bivalent vertex for $\varphi^3$ or for the electron edges in $QED$, and discard the bivalent vertex for the photon edges: these are the conventions adopted in \cite{wvs06}. We have chosen not to consider this simplification, in order to follow \cite{A.D2000} more closely.\\

We introduce a semi-group $G$ of characters of $\wt{\Cal H}_{\Cal T}$ with values in some suitable commutative algebra ${\Cal B}$, and a convolution product $\circledast$ on G. We then implement the renormalization described by A. Connes and D. Kreimer in \cite{A.D2000} (see also \cite[Chap. 1 \S\ 5 \& 6]{CM}), replacing ${\Cal B}$ by $\Cal A := \Cal B [z^{-1} , z]]$. We show that each element of $G$ has a unique Birkhoff decomposition for minimal renormalization scheme ${\Cal A} = {\Cal A}_- \oplus {\Cal A}_+$, where $ {\Cal A}_+ : =   \Cal B [[ z ]]$ and $ {\Cal A}_- : =  z^{-1} \Cal B [z^{-1}]$. We also implement the Birkhoff decomposition associated with Taylor expansions in the algebra ${\Cal B}$ itself, along the lines of \cite{EP}. The interest of the construction presented here is the purely combinatorial nature of the bialgebra $\wt{\Cal H}_{\Cal T}$ and the Hopf algebra ${\Cal H}_{\Cal T}$: all the dependence on momenta is removed in the target algebra ${\Cal B}$. The Feynman rules, given by the integration of these functions on the internal momenta, will be the subject of a future article.\\

\noindent
{\bf Acknowledgements :} We would likes to thank Kurusch Ebrahimi-Fard for discussion and remarks. Research supported by projet CMCU Utique 12G1502 and by Agence Nationale de la Recherche (CARMA ANR-12-BS01-0017).
\section{Hopf algebras of Feynman graphs}
\subsection{Basic definitions}
A Feynman graph is a graph (non-planar) with a finite number of vertices and edges, which can be internal or external. An internal edge is an edge connected at both ends to a vertex, an external edge is an edge with one open end, the other end being connected to a vertex. The edges are obtained by using a half-edges.\\
More precisely, let us consider two finite sets $\Cal V$ and $ \Cal E$. A graph $\Gamma$ with $\Cal V$ (resp. $\Cal E$) as set of vertices (resp. half-edges) is defined as follows: let $ \sigma : \Cal E \longrightarrow \Cal E $ be an involution and $\partial : \Cal E  \longrightarrow \Cal V$. For any vertex $v\in \Cal V$ we denote by $st(v) = \{ e \in \Cal E / \partial (e) = v \}$ the set of half-edges adjacent to $v$. The fixed points of $\sigma$ are the \textsl {external edges} and the \textsl {internal edges} are given by the pairs $\{ e , \sigma (e)\}$ for $ e \neq \sigma (e)$. The graph $\Gamma$ associated to these data is obtained by attaching  half-edges $e\in st(v)$ to any vertex $v\in\Cal V$, and joining the two half-edges $e$ and $\sigma(e)$ if $\sigma(e)\not =e$.\\

Several types of half-edges will be considered later on: the set $\Cal E$ is partitioned into several pieces $\Cal E_i$. In that case we ask that the involution $\sigma$ respects the different types of half-edge, i.e. $\sigma(\Cal E_i)\subset \Cal E_i$.\\

We denote by $\Cal I(\Gamma)$ the set of internal edges and by $\mop{Ext}(\Gamma)$ the set of external edges. The \textsl {loop number} of a graph $\Gamma$ is given by: 
$$L(\Gamma) =  \left|\Cal I (\Gamma)\right| - \left|\Cal V (\Gamma)\right|  + \left|\pi_0 (\Gamma)\right|,$$
where $\pi_0 (\Gamma)$ is the set of connected components of $\Gamma$.\\

A one-particle irreducible graph (in short, $1PI$ graph) is a connected graph which remains connected when we cut any internal edge. A disconnected graph is said to be locally $1PI$ if any of its connected components is $1PI$.
$$\graphedeltacontrat \;\;\;\; \text{is}\;\;\; 1PI  \;\;\;\;\;\;\; \text {and} \;\;\; \graphenonpi \;\;\; \text {is not} \;\;\; 1PI.$$

A covering subgraph of $\Gamma$ is a Feynman graph $\gamma$ (not necessarily connected), obtained from $\Gamma$ by cutting internal edges. In other words:
\begin{enumerate}
\item $ \Cal V (\gamma) = \Cal V (\Gamma)$. 
\item $ \Cal E (\gamma) =  \Cal E (\Gamma)$.
\item $ \sigma_\Gamma (e) = e \Longrightarrow \sigma_\gamma (e) = e $.
\item If $ \sigma_\gamma (e) \neq  \sigma_\Gamma (e) \;\; \text{then} \;\; \sigma_\gamma (e) = e \;\; \text{and} \;\; \sigma_\gamma (\sigma_\Gamma (e)) = \sigma_\Gamma (e)$.
\end{enumerate}

For any covering subgraph $\gamma$, the contracted graph $\Gamma / \gamma$ is defined by shrinking all connected components of $\gamma$ inside $\Gamma$ onto a point. 
$$ \Gamma = \graphedeltasansindice  \;\;\; ,\;\;  \gamma = \graphedeltasansindiced \graphev \graphev \graphev \;\; \Longrightarrow  \;\; \Gamma / \gamma = \graphedeltacontrat$$
$$ \Gamma = \graphedeltasansindice  \;\;\; , \;\; \gamma = \graphedeltasansindiced \graphedeltasansindiced \;\; \Longrightarrow  \;\; \Gamma / \gamma = \graphedeltacontrau $$
 
The residue of the graph $\Gamma$, denoted by $\mop{res} \Gamma$, is the contracted graph $\Gamma / \Gamma$. In other words: it is the only graph with no internal edge and the same external edges than those of $\Gamma$.
$$ \mop{res} (\graphedeltasansindice) =  \graphearetenorcroix \;\;\text {et}\;\; \mop{res} (\graphedeltasansindiced) = \graphev.$$

The skeleton of a graph $\Gamma$ denoted by $\mop {sk} \Gamma$ is a graph obtained by cutting all internal edges, for example:
 $$\mop{sk} (\grapheexmdel) = \graphev \graphev \graphev \graphev.$$
   
\subsection {Bialgebra $\wt{\Cal H}_{\Cal T}$} 
We will work inside a physical theory $\Cal T$, which involves Feynman graphs of some prescribed type: $\varphi^3 , \; \varphi^4$, QED, QCD ...\\
We denote by $\Cal E (\Cal T)$ the set of possible types of half-edges and by $\Cal V(\Cal T)$ the set of possible types of vertices.
\begin{ex}
$\Cal E ( \varphi^3) = \{ \graphearetenor \} \;\;\;, \;\;\; \Cal V ( \varphi^3) = \{\graphearetenorcroix , \graphev \}$\\
$\Cal E ( QED) = \{ \graphearetenor  , \graphearetecur \} \;\;\; , \;\;\; \Cal V ( QED) = \{\graphevcur , \graphearetenorcroix , \graphearetecurcroix  \}$
\end{ex}
An element of $\Cal V (\Cal T)$ can be seen as a function from $\Cal E (\Cal T)$ into $\mathbb N$ which to each type of half-edge associates the number of half-edges of that type arriving on the vertex in question. Actually the typology of vertices presented here is too coarse, we will return to this point in Section 2, with the introduction of specified graphs. \\

Let $ \wt V_{\Cal T}$ be the set of $1PI$ connected graphs $\Gamma$ with edges in $\Cal E (\Cal T)$ and vertices in $\Cal V(\Cal T)$ such that $\mop {res} \Gamma$ is a vertex in $\Cal V_{\Cal T} $ (condition of superficial divergence \cite{SA}, \cite{A.D2000}, \cite{dk98}). Let $\wt {\Cal H}_{\Cal T} = S ( \wt V_{\Cal T}) $ be the vector space generated by superficially divergent, locally $1PI$, not necessarily connected Feynman graphs. The product is given by concatenation, the unit $\un$ is identified to the empty graph, and the coproduct is defined by :
\begin{eqnarray*}
\Delta (\Gamma ) &=& \sum_{{\gamma \subseteq \Gamma \atop \Gamma / \gamma \in \Cal T }}  \gamma \otimes \Gamma / \gamma.
\end{eqnarray*}
In the above sum, $\gamma$ runs over all locally $1PI$ covering subgraphs of $\Gamma$ such that the contracted subgraph  $\Gamma/\gamma$ is in the theory $\Cal T$.
\begin{ex} In $\varphi^3$ Theory:
\begin{eqnarray*}
\Delta (\graphedeltasansindice ) &=& \graphev \graphev \graphev \graphev \graphev \graphev \otimes \graphedeltasansindice + \graphedeltasansindice \otimes \graphearetenorcroix\\
&&\\
&+& 2 \graphedeltasansindiced \graphev \graphev \graphev \otimes \graphedeltacontrat + 2 \graphedeltajdid \graphev \otimes \graphedeltacontrau \\ 
&&\\
&+& \graphedeltasansindiced\graphedeltasansindiced\otimes\graphedeltacontrau + \graphedeltacontratddd \graphev \graphev\otimes \graphedeltacontratdddd \hskip-45mm \graphecroi.
\end{eqnarray*}
The last term is removed because $\graphedeltacontratdddd \notin \varphi^3$.\\
In $Q E D$:
\begin{eqnarray*}
\Delta (\grpqed) &=& \graphevcur \graphevcur \graphevcur \graphevcur \graphevcur \graphevcur  \otimes \grpqed \\
&&\\
&+& \grpqed \otimes \graphearetecurcroix + 2 \grpqeda \graphevcur \otimes \grpqedc\\
&&\\ 
&+& 2 \grpqedb \graphevcur\graphevcur\graphevcur \otimes \grpqedd + \grpqedb  \grpqedb \otimes\grpqedc.
\end{eqnarray*}
\end{ex}
\begin{theorem}
Equipped with this coproduct, $\wt {\Cal H}_{\Cal T}$ is a bialgebra. 
\end{theorem}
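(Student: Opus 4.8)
The plan is to exploit that $\wt{\Cal H}_{\Cal T}=S(\wt V_{\Cal T})$ is a free commutative algebra on the connected generators, with product given by disjoint concatenation. I would define $\Delta$ by the stated formula on a connected generator $\Gamma$ and extend it as an algebra morphism; with this convention the compatibility between product and coproduct, $\Delta(xy)=\Delta(x)\Delta(y)$, holds by construction, and one checks that it agrees with the direct formula on disconnected graphs, since covering subgraphs and contractions of a disjoint union factor componentwise. It therefore remains to produce a counit and to verify coassociativity on a single connected $\Gamma$. Throughout I would use the parametrization of covering subgraphs of $\Gamma$ by the set $S\subseteq\Cal I(\Gamma)$ of retained internal edges: writing $\gamma_S$ for the corresponding covering subgraph, one has $\Cal I(\gamma_S)=S$ and $\Cal I(\Gamma/\gamma_S)=\Cal I(\Gamma)\setminus S$.

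For the counit I would take $\varepsilon$ to be the algebra morphism sending a graph to $1$ if it has no internal edge and to $0$ otherwise. The verification uses the two extremal terms of $\Delta(\Gamma)$: the term $\gamma=\mop{sk}\Gamma$ (no edge retained, so $\Gamma/\gamma=\Gamma$) and the term $\gamma=\Gamma$ (all edges retained, so $\Gamma/\gamma=\mop{res}\Gamma$). Since $\gamma_S$ has no internal edge exactly when $S=\emptyset$ and $\Gamma/\gamma_S$ has no internal edge exactly when $S=\Cal I(\Gamma)$, applying $\varepsilon$ to the left (resp. right) factor kills every term except $\gamma=\mop{sk}\Gamma$ (resp. $\gamma=\Gamma$), yielding $(\varepsilon\otimes\mathrm{id})\Delta(\Gamma)=\Gamma=(\mathrm{id}\otimes\varepsilon)\Delta(\Gamma)$.

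The heart is coassociativity, $(\Delta\otimes\mathrm{id})\Delta=(\mathrm{id}\otimes\Delta)\Delta$. Expanding, the left-hand side is a sum over nested covering subgraphs $\delta\subseteq\gamma\subseteq\Gamma$ of $\delta\otimes(\gamma/\delta)\otimes(\Gamma/\gamma)$, while the right-hand side is a sum over $\gamma\subseteq\Gamma$ together with a covering subgraph $\delta'\subseteq\Gamma/\gamma$ of $\gamma\otimes\delta'\otimes\bigl((\Gamma/\gamma)/\delta'\bigr)$. In the edge-subset language these are indexed respectively by pairs $R\subseteq S\subseteq\Cal I(\Gamma)$ (with $\delta=\gamma_R$, $\gamma=\gamma_S$) and by a set $R\subseteq\Cal I(\Gamma)$ together with a subset $T\subseteq\Cal I(\Gamma)\setminus R$. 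I would match them by the obvious bijection $(R,S)\mapsto(R,\,T:=S\setminus R)$, then identify the three tensor factors using two contraction identities: (a) $\gamma_S/\gamma_R\cong(\Gamma/\gamma_R)_{S\setminus R}$, and (b) transitivity of contraction $(\Gamma/\gamma_R)/(\gamma_S/\gamma_R)\cong\Gamma/\gamma_S$, i.e. the general fact $(\Gamma/\delta)/(\gamma/\delta)=\Gamma/\gamma$ for $\delta\subseteq\gamma$. Both follow from the observation that retaining and contracting edges are operations depending only on the underlying edge subsets of $\Cal I(\Gamma)$, together with the bookkeeping of external half-edges.

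Finally I would check that the side conditions (local $1PI$-ness and membership in $\Cal T$) correspond under this bijection. Membership in $\Cal T$ of a contracted graph is intrinsic: $\Gamma/\gamma_S\in\Cal T$ holds if and only if every connected component of $\gamma_S$ has residue an allowed vertex type, a property of $\gamma_S$ alone; thus both iterated coproducts reduce to the same pair of conditions on $\gamma_R$ and $\gamma_S$. For the $1PI$ constraints I would invoke the lemma that, for $\delta\subseteq\gamma$ with $\delta$ locally $1PI$, $\gamma$ is locally $1PI$ if and only if $\gamma/\delta$ is, since edge contraction both preserves and reflects bridgelessness componentwise. With (a), (b), the index bijection, and this matching of conditions, the two iterated coproducts agree term by term. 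The main obstacle is not the bookkeeping bijection, which is immediate, but the careful proof of the contraction identities (a)--(b) and of the $1PI$ lemma, keeping precise track of how external half-edges and residues behave under iterated contraction, so that the condition of lying in $\Cal T$ is genuinely intrinsic and hence identical on both sides.
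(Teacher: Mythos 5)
Your proposal is correct and follows essentially the same route as the paper: both prove coassociativity by matching the two iterated coproducts through the bijection $\gamma \mapsto \gamma/\delta$ between covering subgraphs of $\Gamma$ containing $\delta$ and covering subgraphs of $\Gamma/\delta$, your retained-edge-set parametrization $(R,S)\mapsto (R, S\setminus R)$ being an explicit realization of the bijection the paper invokes (citing \cite{Dm11}) as ``given by shrinking $\delta$''. Your write-up is in fact somewhat more complete than the paper's, since you also spell out multiplicativity of $\Delta$, the counit axiom via the two extremal terms $\mop{sk}\Gamma\otimes\Gamma$ and $\Gamma\otimes\mop{res}\Gamma$, and the matching of the side conditions (intrinsicness of membership in $\Cal T$ and the locally $1PI$ preservation/reflection lemma), all of which the paper leaves implicit.
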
 
\begin{proof}
$\Delta $ is coassociative. Indeed: 
\begin{eqnarray*}
(\Delta \otimes id)\Delta (\Gamma ) &=& \sum_{{\gamma \subseteq \Gamma \atop \Gamma / \gamma \in \Cal T }}  \Delta(\gamma) \otimes \Gamma / \gamma\\
&=&\sum_{{\delta \subseteq \gamma \subseteq \Gamma \atop \gamma / \delta \;;\; \Gamma / \gamma \in \Cal T }}  \delta \otimes \gamma / \delta \otimes \Gamma / \gamma.
\end{eqnarray*}
\begin{eqnarray*}
(id \otimes \Delta)\Delta (\Gamma ) &=& \sum_{{\delta \subseteq \Gamma \atop \Gamma / \delta \in \Cal T }}  \delta \otimes \Delta(\Gamma / \delta)\\
&=&\sum_{{\delta \subseteq \Gamma \;;\wt\gamma \subseteq \Gamma / \delta  \atop (\Gamma / \delta)/\wt\gamma \;;\; \Gamma / \delta \in \Cal T }}  \delta \otimes \wt\gamma  \otimes (\Gamma / \delta)/\wt\gamma.
\end{eqnarray*}
For any covering subgraph $\delta$ of $\Gamma$ such that $ \Gamma/ \delta\in\Cal T$, there is an abvious bijection $\gamma \mapsto \wt\gamma = \gamma / \delta$ from covering subgraphs of $\Gamma$ containing $ \delta$ such that $ \Gamma / \gamma \in \Cal T$ and $ \gamma / \delta \in \Cal T$, onto covering subgraphs of $ \Gamma / \delta$ such that $(\Gamma / \delta)/\wt\gamma \in \Cal T$, given by shriking $\delta$ \cite{Dm11}. For all $\wt\gamma \subseteq \Gamma / \delta$ there exist a unique covering subgraph $\gamma$ of $\Gamma$ containing $\delta$ such that: $ \wt\gamma \cong \gamma / \delta$ and we have: $ (\Gamma/ \delta) / \wt\gamma \cong \Gamma / \gamma$. We then obtain:
\begin{eqnarray*}
(id \otimes \Delta)\Delta (\Gamma ) &=& \sum_{{\delta \subseteq \gamma \subseteq \Gamma \atop \Gamma / \delta  \in \Cal T }}  \delta \otimes \Delta(\Gamma / \delta)\\
&=&\sum_{{\delta \subseteq \gamma \subseteq \Gamma  \atop \Gamma / \gamma \;;\; \gamma / \delta \in \Cal T }}  \delta \otimes \gamma / \delta \otimes \Gamma / \gamma.
\end{eqnarray*}
The two expressions coincide, therefore $\Delta$ is coassociative. The counit is given by: $ \varepsilon (\Gamma) = 1$ if $\Gamma$ has no internal edges, and $\varepsilon ( \Gamma) = 0$ for any graph having at least one internal edge. The bialgebra $\wt {\Cal H}_{\Cal T}$ is graded, and the grading is given by the number $L$.
\end{proof}
\subsection { Hopf algebra ${\Cal H}_{\Cal T}$}
The Hopf algebra ${\Cal H}_{\Cal T}$ is given by  identifying all elements of  degree zero (the residues) to unit $\un$:
\begin{equation}
{\Cal H}_{\Cal T} = \wt{\Cal H}_{\Cal T} / \Cal J
\end{equation}
where $\Cal J$  is the ideal generated by the elements $\un - \mop{res} \Gamma$ where $\Gamma$ is an $1PI$ graph. ${\Cal H}_{\Cal T}$ is a connected graded bialgebra, it is therefore a connected graded Hopf algebra, which can be identified as a commutative algebra with $S(\Cal V_\Cal T)$, where $\Cal V_\Cal T$ is the vector space generated by the $1PI$ connected Feynman graphs. The coproduct then becomes:
\begin {equation}
\Delta (\Gamma ) = \un \otimes \Gamma + \Gamma \otimes \un + \sum_{{\gamma \hbox{ \sevenrm proper subgraph of } \Gamma \atop \hbox{ \sevenrm loc 1PI. }\; \Gamma / \gamma \in \Cal T }} \gamma \otimes \Gamma / \gamma.
\end{equation}
\begin{ex} In $\varphi^3$ Theory:
\begin{eqnarray*}
\Delta (\graphedeltasansindice ) &=& \un \otimes \graphedeltasansindice + \graphedeltasansindice \otimes \un + 2 \graphedeltasansindiced \otimes \graphedeltacontrat\\
&&\\
&+& 2 \graphedeltajdid \otimes \graphedeltacontrau + \graphedeltasansindiced\graphedeltasansindiced\otimes\graphedeltacontrau + \graphedeltacontratddd \otimes \graphedeltacontratdddd \hskip-30mm \graphecroipetit.  
\end{eqnarray*}
In $Q E D$:
\begin{eqnarray*}
\Delta (\grpqed) &=& \un  \otimes \grpqed + \grpqed \otimes \un + \grpqedb  \grpqedb \otimes\grpqedc\\
&&\\
&+&  2 \grpqeda \otimes \grpqedc + 2 \grpqedb \otimes \grpqedd.
\end{eqnarray*}

\end{ex}
\section{Specified Feynman graphs Hopf Algebra}
\subsection {Bialgebra $\wt{\Cal H}_{\Cal T}$}
In this paragraph, we denote by  $\wt{\Cal V}(\Cal T)$ the set of possible refined types of vertices: for $ t \in {\Cal V}(\Cal T)$, you can have a vertices of the same type $t$ but with different refined type. For any refined type $\wt t \in \wt{\Cal V}(\Cal T)$ we denote by $[\; \wt {t} \;]$ the underlying vertex type. We denote also  $\wt t= (t,i)$ where the index $i$ serves to distinguish the refined types of same underlying type.
\begin{ex}
$\wt{\Cal V}(\varphi^3) = \{ \graphearetenorcroixzero , \graphearetenorcroixun , \graphev  \}$ \\
$\wt{\Cal V}(QED) = \{ \graphearetenorcroixzero , \graphearetenorcroixun , \graphevcur , \graphearetecurcroixun \}$ 
\end{ex}
\begin{remark}
Note that the types of half-edges adjacent to a vertex $v$ are not sufficient to determine its refined type.
\end{remark}
\begin{definition} \label{df1}
A specified graph of theory $\Cal T$ is a couple $(\Gamma , \underline{i})$ where:
\begin{enumerate}
\item $\Gamma$ is a locally $1PI$ superficially divergent graph with half-edges and vertices of the type prescribed in $\Cal T$. 
\item $\underline{i} : \pi_0(\Gamma) \longrightarrow \mathbb N$, the values of $\underline {i}(\gamma)$ being prescribed by the possible types of vertex obtained by contracting the connected component $\gamma$ on a point.
\end{enumerate}
We will say that $(\gamma , \underline{j})$ is a specified covering subgraph of $(\Gamma , \underline{i})$, $\big( (\gamma , \underline{j}) \subset (\Gamma , \underline{i}) \big)$  if:
\begin{enumerate}
\item $\gamma$ is a covering subgraph of $\Gamma$.
\item if $\gamma_0$ is a full connected component of $\gamma$, i.e if $\gamma_0$ is also a full connected component of $\Gamma$, then $\underline{j} (\gamma_0) = \underline{i} (\gamma_0)$.
\end{enumerate}
\end{definition}
\begin{remark}
Sometimes we denote by $\bar \Gamma = (\Gamma , \underline{i})$ the specified graph, and we will write $\bar \gamma \subset \bar \Gamma$ for $(\gamma , \underline{j}) \subset (\Gamma , \underline{i})$.
\end{remark}
\begin{definition}
Let be $(\gamma , \underline{j}) \subset (\Gamma , \underline{i})$. The contracted specified subgraph is written:
$$ \bar\Gamma / \bar \gamma = (\Gamma/\bar\gamma , \underline{i}), $$
where $\bar\Gamma/\bar\gamma$ is obtained by contracting each connected component of $\gamma$ on a point, and specifying the vertex obtained with $\underline {j}$.
\end{definition}
\begin{remark}
The specification $\underline{i}$ is the same for the graph $\bar\Gamma$ and the contracted graph $\bar\Gamma / \bar \gamma$. 
\end{remark}

Let $\wt {\Cal H}_{\Cal T}$ be the vector space generated by the specified superficially divergent Feynman graphs of a field theory $\Cal T$. The product is given by the concatenation, the unit $ \un $ is identified with the empty graph and the coproduct is defined by:
\begin{eqnarray*}
\Delta (\bar\Gamma ) &=& \sum_{{\bar\gamma \subseteq \bar\Gamma \atop \bar\Gamma / \bar\gamma \in \Cal T }} \bar \gamma \otimes \bar\Gamma / \bar\gamma,
\end{eqnarray*}
where the sum runs over all locally $1PI$ specified covering subgraphs $ \bar\gamma = (\gamma,\underline{j})$ of $ \bar\Gamma = (\Gamma,\underline{i})$ , such that the contracted subgraph $(\Gamma/(\gamma,\underline{j}) , \underline{i})$ is in the theory $\Cal T$.
\begin{theorem} 
Equipped with the coproduct $\Delta$,  $\wt {\Cal H}_{\Cal T}$ is a bialgebra.
\end{theorem}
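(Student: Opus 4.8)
The plan is to follow the proof of Theorem 1 closely, carrying the specification indices along at each step. Four things must be verified: that $\Delta$ is an algebra morphism, that it is coassociative, that the counit behaves correctly, and that the structure is graded. Multiplicativity is the easiest point. Since the product is concatenation and $\pi_0(\Gamma_1 \sqcup \Gamma_2) = \pi_0(\Gamma_1) \sqcup \pi_0(\Gamma_2)$, a specified covering subgraph of a disjoint union is exactly a concatenation of specified covering subgraphs of the two factors, its index restricting componentwise to the two indices. Hence $\Delta(\bar\Gamma_1 \bar\Gamma_2) = \Delta(\bar\Gamma_1)\Delta(\bar\Gamma_2)$, and $\varepsilon$ is multiplicative for the same reason.

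The heart of the argument is coassociativity, for which I would expand both iterated coproducts. On one side,
\begin{eqnarray*}
(\Delta \otimes id)\Delta(\bar\Gamma) &=& \sum_{{\bar\delta \subseteq \bar\gamma \subseteq \bar\Gamma \atop \bar\gamma/\bar\delta \;;\; \bar\Gamma/\bar\gamma \in \Cal T}} \bar\delta \otimes \bar\gamma/\bar\delta \otimes \bar\Gamma/\bar\gamma;
\end{eqnarray*}
on the other side, writing $\bar{\wt\gamma}$ for a specified covering subgraph of $\bar\Gamma/\bar\delta$,
\begin{eqnarray*}
(id \otimes \Delta)\Delta(\bar\Gamma) &=& \sum_{{\bar\delta \subseteq \bar\Gamma \atop \bar\Gamma/\bar\delta \in \Cal T}} \bar\delta \otimes \Delta(\bar\Gamma/\bar\delta).
\end{eqnarray*}
I would then invoke the bijection of Theorem 1 at the level of underlying graphs, namely $\gamma \mapsto \wt\gamma = \gamma/\delta$ obtained by shrinking $\delta$, which already supplies $\wt\gamma \cong \gamma/\delta$ and $(\Gamma/\delta)/\wt\gamma \cong \Gamma/\gamma$.

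The only genuinely new content, and the step I expect to be the main obstacle, is checking that this bijection respects the specifications, so that it lifts to a bijection of \emph{specified} graphs. Two observations should settle this. First, contracting the pieces of $\delta$ lying inside a connected component of $\gamma$ neither disconnects nor merges components, so $\pi_0(\gamma/\delta) = \pi_0(\gamma)$; the index $\underline{j}$ specifying $\gamma$ is therefore precisely the datum that specifies $\wt\gamma = \gamma/\delta$, giving $\bar{\wt\gamma} \cong \bar\gamma/\bar\delta$. Second, by the Remark following the definition of the contracted specified subgraph, contraction leaves the outer index $\underline{i}$ of $\Gamma$ unchanged, so both iterated contractions carry $\underline{i}$ to the rightmost tensor factor; combined with $(\Gamma/\delta)/\wt\gamma \cong \Gamma/\gamma$ this yields $(\bar\Gamma/\bar\delta)/\bar{\wt\gamma} \cong \bar\Gamma/\bar\gamma$ as specified graphs. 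The two triple sums then agree term by term, establishing coassociativity.

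Finally I would record that the counit $\varepsilon(\bar\Gamma) = 1$ when $\Gamma$ has no internal edge and $\varepsilon(\bar\Gamma) = 0$ otherwise satisfies the counit axioms exactly as in the unspecified case, the specification playing no role there, and that the loop number $L(\Gamma)$, which is insensitive to the choice of index $\underline{i}$, provides the grading. Together these points show that $\wt{\Cal H}_{\Cal T}$ is a graded bialgebra.
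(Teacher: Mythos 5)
Your proposal is correct and follows essentially the same route as the paper: expand both iterated coproducts and match them via the bijection $\bar\gamma \mapsto \bar\gamma/\bar\delta$ given by shrinking $\bar\delta$, observing that since contraction induces an identification $\pi_0(\gamma/\delta)\cong\pi_0(\gamma)$ the index $\underline{j}$ transports unchanged and the outer index $\underline{i}$ stays on the rightmost factor. Your treatment is in fact slightly more explicit than the paper's (which simply asserts an ``obvious bijection'' and omits the multiplicativity and grading checks), but the underlying argument is identical.
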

\begin{proof}
$\Delta $ is coassociative. Indeed : 
\begin{eqnarray*}
(\Delta \otimes id)\Delta (\bar\Gamma ) &=& \sum_{{\bar\gamma \subseteq \bar\Gamma \atop \bar\Gamma / \bar\gamma \in \Cal T }}  \Delta(\bar\gamma) \otimes \bar\Gamma / \bar\gamma\\
&=&\sum_{{\bar\delta \subseteq \bar\gamma \subseteq \bar\Gamma \atop \bar\gamma / \bar\delta \;\;;\; \bar\Gamma / \bar\gamma \in \Cal T }}  \bar\delta \otimes \bar\gamma / \bar\delta \otimes \bar\Gamma / \bar\gamma.
\end{eqnarray*}
\begin{eqnarray*}
(id \otimes \Delta)\Delta (\bar\Gamma ) &=& \sum_{{\bar\delta \subseteq \bar\Gamma \atop \bar\Gamma / \bar\delta \in \Cal T }}  \bar\delta \otimes \Delta(\bar\Gamma / \bar\delta)\\
&=&\sum_{{\bar\delta \subseteq \bar\Gamma \;\;;\bar\alpha \subseteq \bar\Gamma / \bar\delta  \atop (\bar\Gamma / \bar\delta)/\bar\alpha \;\;;\; \bar\Gamma / \bar\delta \in \Cal T }}  \bar\delta \otimes \bar\alpha  \otimes (\bar\Gamma / \bar\delta)/\bar\alpha.
\end{eqnarray*}
For any specified covering subgraph $\bar\delta$ of $\Gamma$ such that $ \bar\Gamma/ \bar\delta\in\Cal T$, there is an obvious bijection $\bar\gamma \mapsto \bar\alpha = \bar\gamma / \bar\delta$ from specified covering subgraphs of $\bar\Gamma$ containing $ \bar\delta$ such that $(\bar\Gamma /\bar\delta)/\bar\alpha ; \bar\Gamma / \bar\delta \in \Cal T$, onto specified covering subgraphs of $\bar\Gamma / \bar\delta$ such that $\bar\gamma / \bar\delta \;\;; \bar\Gamma / \bar\gamma \in \Cal T$, given by shriking $\bar\delta$. Then for any specified covering subgraph $\bar\alpha = ( \alpha , \underline j)$ of $\Gamma / \bar\delta$ there exists a unique specified covering subgraph $\bar\gamma = (\gamma , \underline j)$ of $\bar\Gamma$ such that $\delta \subseteq \gamma $ and  $ \alpha \cong \gamma /\bar\delta$, we have: $  \bar\alpha \cong \bar\gamma / \bar\delta$  and $ (\bar\Gamma/ \bar\delta) / \bar\alpha \cong \bar\Gamma / \bar\gamma$. We then obtain:
\begin{eqnarray*}
(id \otimes \Delta)\Delta (\bar\Gamma ) &=& \sum_{{\bar\delta \subseteq \bar\gamma \subseteq \bar\Gamma \atop \bar\gamma / \bar\delta \;\;;\; \bar\Gamma / \bar\gamma \in \Cal T }}  \bar\delta \otimes \bar\gamma / \bar\delta \otimes \bar\Gamma / \bar\gamma.
\end{eqnarray*}
Then $\Delta$ is coassociative, $\wt {\Cal H}_{\Cal T}$ is a bialgebra where the counit is given by $ \varepsilon (\bar\Gamma) = 1$ if $\bar\Gamma$ has no internal edges, and $\varepsilon ( \bar\Gamma) = 0$ for any graph $\bar\Gamma$ having at least one internal edge.
\end{proof}
\begin{ex} \label{ex1}
In $\varphi^3$ Theory:
\begin{eqnarray*}
\Delta(\grapheexmdel , 0) &=& (\grapheexmdel , 0) \otimes \graphearetenorcroixzero  + \graphev \graphev \graphev \graphev \otimes (\grapheexmdel , 0)\\
&&\\
&+& (\graphev \graphev \graphecerc , 0)\otimes (\graphecerccroizero , 0)\\
&&\\
&+&  (\graphev \graphev \graphecerc , 1)\otimes (\graphecerccroiun , 0).
\end{eqnarray*}
In $QED$:
\begin{eqnarray*}
\Delta (\qedj , 1) &=& \graphevcur \graphevcur \graphevcur \graphevcur\otimes (\qedj , 1)\\
&&\\
&+& (\qedj , 1) \otimes \graphearetenorcroixun  + (\qeda \graphevcur \graphevcur, 0) \otimes (\qedcoiz , 1)\\
&&\\ 
&+&(\qeda \graphevcur \graphevcur, 1) \otimes (\qedcoiu , 1).
\end{eqnarray*}
\end{ex}
\subsection {Hopf algebra ${\Cal H}_{\Cal T}$}
The Hopf algebra ${\Cal H}_{\Cal T}$ is given by  identifying all elements of degree zero (the residues) to unit $\un$:
\begin{equation}
{\Cal H}_{\Cal T} = \wt{\Cal H}_{\Cal T} / \Cal J
\end{equation}
where $\Cal J$  is the ideal generated by the elements $\un - \mop{res} \bar\Gamma$ where $\bar\Gamma$ is an $1PI$ specified graph. ${\Cal H}_{\Cal T}$ is a connected graded bialgebra, it is therefore a connected graded Hopf algebra, the coproduct then becomes:

\begin {equation}
\Delta (\bar\Gamma ) = \un \otimes \bar\Gamma + \bar\Gamma \otimes \un + \sum_{{\bar\gamma \hbox{ \sevenrm proper subgraph of }\; \bar\Gamma \atop \hbox{ \sevenrm loc 1PI.}\; \bar\Gamma / \bar\gamma \in \Cal T }} \bar\gamma \otimes \bar\Gamma /\bar\gamma.
\end{equation}
\begin{ex}
Taking the same graphs as the example \ref{ex1} we obtain in $\varphi^3$ Theory \cite{A.D2000}: 
\begin{eqnarray*}
\Delta(\grapheexmdel , 0) &=& (\grapheexmdel, 0) \otimes \un  + \un \otimes (\grapheexmdel, 0)\\
&&\\
&+& (\graphecerc, 0)\otimes (\graphecerccroizero, 0) +  (\graphecerc, 1) \otimes (\graphecerccroiun, 0).
\end{eqnarray*}
In $QED$ \cite{wvs06}:
\begin{eqnarray*}
\Delta (\qedj , 1) &=& \un \otimes (\qedj , 1) + (\qedj , 1) \otimes \un \\
&&\\
&+& (\qeda , 0) \otimes (\qedcoiz , 1) + (\qeda , 1) \otimes (\qedcoiu , 1).
\end{eqnarray*}
\end{ex}
\section {External structures}
\subsection{The unordered tensor product}
Let $A$ be a finite set, and let $V_j$ be a vector space for any $j\in A$. The product $\prod_{j\in A} V_{j}$ is defined by:
$$\prod_{j\in A} V_{j} := \{ v : A \longrightarrow \coprod_{j\in A} V_j \;, \; v(i) \; \in \;  V_{i} \;\forall \; i \; \in A \}.$$ 
The space $ V := \bigotimes_{j\in A} V_{j}$ is then defined by the following universal property: for any vector space $E$ and for any multilinear map $F : \prod_{j\in A} V_{j} \longrightarrow E$, there exists a unique linear map $\bar F$ such that the following diagram is commutative:
\diagramme{
\xymatrix{
& \bigotimes_{j\in A} V_{j} \ar[d]^{\bar F} \\
 \prod_{j\in A} V_{j} \ar[ru]^{v \mapsto \bigotimes_{j\in A} v_j} \ar[r]_{F} & E }
 }

\begin{remark}
Let $\big( e_{\lambda} \big)_{\lambda \in {\Lambda_j}} $ be a basis of $V_{j}$. A basis of $\bigotimes_{j\in A} V_{j}$ is given by:
$$ \big( f_\mu = \bigotimes_{j\in A} e_{\mu(j)} \big)_{\mu \in  \Lambda},$$ 
where $ \Lambda = \prod_{j\in A} \Lambda_j =  \{ \mu : A \longrightarrow \coprod_{j\in A} \Lambda_j \;\; \text {such that}\;\;  \mu (j) \in \Lambda_j \}$.  
\end{remark}
\subsection {An algebra of $\Cal C^{\infty}$ functions}
Let $D$ be an integer $\geq 1$ (the dimension). For any half-edge $e$ of $\Gamma$ we denote by $p_e \in  \mathbb R^D$ the corresponding moment. More precisely the moment space of graph $\Gamma$ is defined by:
$$ W_\Gamma = \{ p :\Cal E (\Gamma) \longrightarrow \mathbb R^D ,   \sum_{e \in st(v)} p_e = 0 \;\;\forall v \in \Cal V (\Gamma) ,\;\; p_e + p_{\sigma(e)} = 0\;\; \forall e \in \Cal V (\Gamma) \;\;/ \;\; e \neq\sigma(e) \}.$$
In particular,
$$ W_{\smop{res} \Gamma} = \{ (p_1, \cdots ,p_{\left|\smop {Ext} (\Gamma)\right|} ), p_j  \in \mathbb R^D ,  \sum_{j=1}^{\left|\smop{Ext} (\Gamma)\right|} p_j =0 \}.$$
We introduce then:
$$V_\Gamma = \Cal C^{\infty} ( W_\Gamma ,\mathbb C) \;\; \text {for} \; \Gamma  \; \text{ connected },$$
$$ V= \prod_{\Gamma \smop{connected}}  V_\Gamma,$$
and finally:
\begin{equation}\label{calB}
\Cal B :=  \prod_{\Gamma \smop{connected or not}} V_\Gamma,
\end{equation}
where  the space $ V_\Gamma := \bigotimes_{j\in A} V_{\Gamma_j}$ is the unordered tensor product of the $V_{\Gamma_j}$'s and where the $\Gamma_j$'s are the connected components of $\Gamma$.  The space $ V_{\Gamma}$ is naturally identified with a subspace of $\Cal C^{\infty} ( W_\Gamma ,\mathbb C)$ via: 
$$\bigotimes_{j\in A} v_i (p) := \prod_{j\in A} v_j (p_j) \; \text {with} \; p_j := p_{|\Cal E ({\Gamma_j})}.$$
We equip also $\Cal B$ with the unordered concatenation product denoted by $\bullet$: for $v = \bigotimes_{j\in A} v_j \in V_{\Gamma}$ and $v' = \bigotimes_{j\in B} v_j \in V_{\Gamma'}$ (with $A\cap B=\emptyset$), the product $v \bullet v' \in V_{\Gamma \Gamma'}$  is defined by:
\begin{equation}
v \bullet v' =  \bigotimes_{j\in A \coprod B} v_j.
\end{equation}
The product $\bullet$ is commutative by definition. This definition extends naturally to a bilinear product: $\Cal B \times \Cal B \longrightarrow \Cal B$.
\begin{proposition} \label{concat}
Let $\Gamma_1$ and $\Gamma_2$ be two graphs (not necessarily connected), and let $\Gamma = \Gamma_1\Gamma_2$. For any $v_1 , v'_1 \in  V_{\Gamma_1}$ and $v_2 , v'_2 \in  V_{\Gamma_2}$ we have the following equality in $V_\Gamma$ :
$$ (v_1  v'_1) \bullet (v_2 v'_2) = (v_1 \bullet v_2)  (v'_1 \bullet v'_2).$$
\end{proposition}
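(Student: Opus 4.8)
The plan is to push both products down to the level of complex-valued functions, where the statement collapses to the commutativity of multiplication in $\mathbb C$; the entire content is then the bookkeeping of which momenta each factor depends on. Since the concatenation product $\bullet$ is bilinear (as noted just before the proposition) and the pointwise product of functions is bilinear, the expressions on both sides are multilinear in the quadruple $(v_1,v_1',v_2,v_2')$. First I would therefore reduce to the case of simple tensors, writing $v_1=\bigotimes_{j\in A}a_j$, $v_1'=\bigotimes_{j\in A}a_j'$, $v_2=\bigotimes_{k\in B}b_k$ and $v_2'=\bigotimes_{k\in B}b_k'$, where $A$ and $B$ index the connected components of $\Gamma_1$ and $\Gamma_2$ respectively, and $A\cap B=\emptyset$.

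Next I would record the factorization of the moment space. Because $\Gamma=\Gamma_1\Gamma_2$ is a disjoint union we have $\Cal E(\Gamma)=\Cal E(\Gamma_1)\coprod\Cal E(\Gamma_2)$ and $\Cal V(\Gamma)=\Cal V(\Gamma_1)\coprod\Cal V(\Gamma_2)$, and the defining constraints of $W_\Gamma$ (both $\sum_{e\in st(v)}p_e=0$ and $p_e+p_{\sigma(e)}=0$) are local to each component. Hence the restriction $p\mapsto(p_{|\Cal E(\Gamma_1)},p_{|\Cal E(\Gamma_2)})$ identifies $W_\Gamma$ with $W_{\Gamma_1}\times W_{\Gamma_2}$. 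Under the identification of $V_\Gamma$ with a subspace of $\Cal C^\infty(W_\Gamma,\mathbb C)$, the definition of $\bullet$ says exactly that $v_1\bullet v_2$ is the function $p\mapsto v_1(p_{|\Cal E(\Gamma_1)})\,v_2(p_{|\Cal E(\Gamma_2)})$, the external product of the two functions, and likewise for $v_1'\bullet v_2'$.

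With these two observations the computation is immediate. Writing $q_1:=p_{|\Cal E(\Gamma_1)}$ and $q_2:=p_{|\Cal E(\Gamma_2)}$, evaluation of the left-hand side at $p\in W_\Gamma$ gives
$$\big((v_1 v_1')\bullet(v_2 v_2')\big)(p)=(v_1 v_1')(q_1)\,(v_2 v_2')(q_2)=v_1(q_1)\,v_1'(q_1)\,v_2(q_2)\,v_2'(q_2),$$
while the right-hand side evaluated at $p$ is
$$\big((v_1\bullet v_2)(v_1'\bullet v_2')\big)(p)=v_1(q_1)\,v_2(q_2)\,v_1'(q_1)\,v_2'(q_2).$$
These coincide by commutativity of multiplication in $\mathbb C$, which establishes the identity on simple tensors and hence, by multilinearity, in general.

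The only genuinely delicate points — and where I would spend the care — are the two identifications used above. First, one must check that the pointwise product of two elements of the subspace $V_\Gamma\subset\Cal C^\infty(W_\Gamma,\mathbb C)$ again lies in $V_\Gamma$ and factors component-wise, i.e. that it corresponds to $\bigotimes_{j}(a_j a_j')$; this rests on the elementary factorization $\prod_j f_j(p_j)\cdot\prod_j g_j(p_j)=\prod_j (f_j g_j)(p_j)$. Second, one must verify that $\bullet$ really becomes the external product of functions under $W_\Gamma\cong W_{\Gamma_1}\times W_{\Gamma_2}$, which is precisely where the disjointness $A\cap B=\emptyset$ is used, so that the index sets of the restricted momenta $p_{|\Cal E(\Gamma_j)}$ do not overlap and the unordered tensor product over $A\coprod B$ is unambiguous. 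Once these compatibilities are in place the statement is purely formal.
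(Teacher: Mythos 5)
Your proof is correct and follows essentially the same route as the paper: both arguments evaluate the two sides pointwise at a momentum $p=(p_1,p_2)$ under the identification $W_\Gamma\cong W_{\Gamma_1}\times W_{\Gamma_2}$ and conclude by commutativity of multiplication in $\mathbb C$. The paper's proof is exactly this five-line pointwise computation; your additional care about the factorization of the moment space, the reduction to simple tensors, and the disjointness $A\cap B=\emptyset$ simply makes explicit the identifications the paper uses tacitly.
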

\begin{proof}
For $p_1 \in W_{\Gamma_1}$ et $p_2 \in W_{\Gamma_2}$ we have:

\begin{eqnarray*}
(v_1  v'_1) \bullet (v_2 v'_2) ( p_1, p_2) &=& v_1  v'_1( p_1) v_2 v'_2 (p_2)\\
&=& v_1 ( p_1)  v'_1( p_1) v_2 ( p_2) v'_2 (p_2)\\
&=&v_1 ( p_1) v_2 ( p_2) v'_1( p_1) v'_2 (p_2)\\
&=&(v_1 \bullet v_2) ( p_1, p_2)  (v'_1 \bullet v'_2)( p_1, p_2)\\
&=&(v_1 \bullet v_2)(v'_1 \bullet v'_2)( p_1, p_2).
\end{eqnarray*}
\end{proof}
\subsection {Convolution product $\circledast$}
Let $\Gamma$ be a graph and $\gamma$ a covering subgraph of $\Gamma$. We denote by $ i_{\Gamma, \gamma} : V_{\Gamma / \gamma} \hookrightarrow V_{\Gamma }$ and $ \pi_{\gamma, \Gamma} : V_{\gamma} \twoheadrightarrow V_{\Gamma }$ two morphisms of algebras which are defined as follows:\\
Let $\Cal F_{\Gamma,\gamma} : W_\Gamma \rightarrow W_{\Gamma/\gamma}$ the projection of $W_\Gamma $ onto $W_{\Gamma/\gamma}$ by neglecting the internal moments of $\gamma$, that we can still be defined by the following commutative diagram:
\diagramme{
\xymatrix{
& \Cal E (\Gamma) \ar[d]^{p} \\
 \Cal E (\Gamma/\gamma) \ar@{^{(}->}[ru]^{\mop{inj}(\Gamma, \gamma)} \ar[r]_{\Cal F_{\Gamma,\gamma}(p)} & \mathbb R^D}
} 
where $\mop{inj}(\Gamma, \gamma)$ is the natural injection and $\Cal F_{\Gamma,\gamma} = \mop{inj}(\Gamma, \gamma)^\ast$.
We now consider the following commutative diagram:
\diagramme{
\xymatrix{
& W_{\Gamma/\gamma} \ar[d]^{f} \\
 W_{\Gamma} \ar@{->>}[ru]^{\Cal F_{\Gamma,\gamma}} \ar[r]_{{\Cal F_{\Gamma,\gamma}}^\ast f} & \mathbb C}
} 
\noindent We define the injection $i_{\Gamma, \gamma}$ by : $i_{\Gamma, \gamma} = {\Cal F_{\Gamma,\gamma}}^\ast$.\\
We denote by $\Cal G_{\gamma,\Gamma}:  W_\Gamma \hookrightarrow W_\gamma$ the natural inclusion of $W_\Gamma$ in $W_\gamma$ and we consider the following commutative diagram:

\diagramme{
\xymatrix{
& W_{\gamma} \ar[d]^{f} \\
 W_{\Gamma} \ar@{^{(}->}[ru]^{\Cal G_{\gamma,\Gamma}} \ar[r]_{{\Cal G_{\gamma,\Gamma}}^\ast f} & \mathbb C}
} 
\noindent We define the surjection $\pi_{\gamma, \Gamma}$ by:  $\pi_{\gamma, \Gamma}: ={\Cal G_{\gamma,\Gamma}}^\ast :  V_{\gamma} \twoheadrightarrow V_{\Gamma }$.

Let $ \wt{\Cal H}_{\Cal T}$ be the specified Feynman graphs bialgebra associated with a theory $\Cal T$. We denote by ${\Cal L} ( \wt{\Cal H}_{\Cal T}, \Cal B)$ the space of $\mathbb C $-linear maps  $\chi :\wt{\Cal H}_{\Cal T} \longrightarrow \Cal B$, and by $\wt {\Cal L} ( \wt{\Cal H}_{\Cal T}, \Cal B)$ the subspace of ${\Cal L} ( \wt{\Cal H}_{\Cal T}, \Cal B)$ of $\chi$ such that:
\begin{enumerate}
\item $\chi$ does not depend on the specification of $\Gamma$, in other words: $\chi( \Gamma ,\underline{i}) = \chi( \Gamma)$.
\item $ \chi (\Gamma) \in V_\Gamma$  for any graph $\Gamma$, i.e. the projection of $\chi(\Gamma)$ on $V_{\Gamma'}$ vanishes for any graph $\Gamma'\not=\Gamma$.
\item $ \chi (\Gamma) = \un_{V_\Gamma}$ if $\Gamma$ has no internal edges, where $\un_{V_\Gamma}$ denotes the constant function equal to $\un$ on $W_\Gamma$. 
\end{enumerate} 
Then we define a convolution product $\circledast$ for all $\chi$, $\eta \in \wt {\Cal L} ( \wt{\Cal H}_{\Cal T}, \Cal B)$ and for all specified graphs  $(\Gamma, \underline{i})$ by:
\begin{equation}\label{conv}
(\chi \circledast \eta) (\Gamma , \underline{i}) = (\chi \circledast \eta) (\Gamma) = \sum_{(\gamma,\underline j)\subset(\Gamma, \underline i) \atop (\Gamma, \underline i) / (\gamma,\underline j) \in \Cal T } \pi_{\gamma, \Gamma} [\chi (\gamma )] i_{\Gamma, \gamma} \big[ \eta \big(\Gamma/(\gamma,\underline j)\big) \big].
\end{equation}
The product used in the right hand side is the pointwise product in $V_\Gamma$.
\begin{theorem}
The product $\circledast$ is associative.
\end{theorem}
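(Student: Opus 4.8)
The plan is to derive associativity of $\circledast$ from the coassociativity of $\Delta$ already established on specified graphs, together with the fact that $\pi_{\gamma,\Gamma}$ and $i_{\Gamma,\gamma}$ are morphisms of algebras for the pointwise product of $V_\Gamma$. Since every $\chi\in\wt{\Cal L}(\wt{\Cal H}_{\Cal T},\Cal B)$ is insensitive to the specification (condition $(1)$), the indices $\underline i,\underline j$ only play the bookkeeping role they already play in the coproduct, so the underlying combinatorics is exactly that of the coassociativity computation. Thus it suffices to match, summand by summand, the two iterated convolutions against the two nested sums $(\Delta\otimes\mop{id})\Delta$ and $(\mop{id}\otimes\Delta)\Delta$.

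First I would expand both sides on a specified graph $\bar\Gamma=(\Gamma,\underline i)$. Substituting \eqref{conv} twice gives
$$\big((\chi\circledast\eta)\circledast\zeta\big)(\bar\Gamma)=\sum_{\bar\delta\subset\bar\gamma\subset\bar\Gamma}\pi_{\gamma,\Gamma}\Big[\pi_{\delta,\gamma}[\chi(\delta)]\,i_{\gamma,\delta}[\eta(\gamma/\bar\delta)]\Big]\,i_{\Gamma,\gamma}[\zeta(\Gamma/\bar\gamma)]$$
and, writing $\bar\alpha$ for a covering subgraph of $\bar\Gamma/\bar\delta$,
$$\big(\chi\circledast(\eta\circledast\zeta)\big)(\bar\Gamma)=\sum_{\bar\delta\subset\bar\Gamma,\ \bar\alpha\subset\bar\Gamma/\bar\delta}\pi_{\delta,\Gamma}[\chi(\delta)]\,i_{\Gamma,\delta}\Big[\pi_{\alpha,\Gamma/\delta}[\eta(\alpha)]\,i_{\Gamma/\delta,\alpha}[\zeta((\Gamma/\delta)/\bar\alpha)]\Big],$$
the summation conditions being those of the coassociativity proof. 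Because $\pi_{\gamma,\Gamma}$ and $i_{\Gamma,\delta}$ are algebra morphisms for the pointwise product, I push them inside the products so that each summand becomes a triple product of images of $\chi,\eta,\zeta$. I then invoke the bijection $\bar\gamma\mapsto\bar\alpha=\bar\gamma/\bar\delta$ established above for specified graphs, which for fixed $\bar\delta$ matches the $\bar\gamma$ with $\bar\delta\subset\bar\gamma\subset\bar\Gamma$ to the covering subgraphs $\bar\alpha$ of $\bar\Gamma/\bar\delta$, with $\alpha\cong\gamma/\delta$, $(\Gamma/\delta)/\bar\alpha\cong\Gamma/\bar\gamma$ and the theory-membership conditions corresponding; this reindexes the second sum over the same nested triples as the first, and (using again that $\eta$ ignores specification so $\eta(\alpha)=\eta(\gamma/\bar\delta)$) reduces everything to identifying the three factors.

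The matching of the three factors amounts to three identities of maps
$$\pi_{\gamma,\Gamma}\circ\pi_{\delta,\gamma}=\pi_{\delta,\Gamma},\qquad i_{\Gamma,\delta}\circ i_{\Gamma/\delta,\alpha}=i_{\Gamma,\gamma},\qquad \pi_{\gamma,\Gamma}\circ i_{\gamma,\delta}=i_{\Gamma,\delta}\circ\pi_{\alpha,\Gamma/\delta}.$$
Since $\pi_{\gamma,\Gamma}=\Cal G_{\gamma,\Gamma}^\ast$ and $i_{\Gamma,\gamma}=\Cal F_{\Gamma,\gamma}^\ast$ and pullback is contravariant, the first two are the pullbacks of the transitivity relations $\Cal G_{\delta,\gamma}\circ\Cal G_{\gamma,\Gamma}=\Cal G_{\delta,\Gamma}$ and $\Cal F_{\Gamma/\delta,\alpha}\circ\Cal F_{\Gamma,\delta}=\Cal F_{\Gamma,\gamma}$, both immediate: composing two inclusions of momentum spaces is the single inclusion, and discarding the internal momenta of $\delta$ and then of $\alpha=\gamma/\delta$ is the same as discarding those of $\gamma$ in one step.

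The heart of the matter, and what I expect to be the main obstacle, is the third, mixed relation. Dually it is the commutativity of the square
$$\Cal F_{\gamma,\delta}\circ\Cal G_{\gamma,\Gamma}=\Cal G_{\gamma/\delta,\,\Gamma/\delta}\circ\Cal F_{\Gamma,\delta}\ :\ W_\Gamma\longrightarrow W_{\gamma/\delta},$$
that is, restricting a momentum assignment on $\Gamma$ to the subgraph $\gamma$ and then forgetting the momenta internal to $\delta$ must yield the same element of $W_{\gamma/\delta}$ as first forgetting the momenta internal to $\delta$ in $\Gamma$ and then restricting to $\gamma/\delta$. I would verify this directly on edge sets: the edges of $\gamma/\delta$ are exactly the edges of $\gamma$ not internal to $\delta$, and on each such edge both composites return the original $p_e$; the only point to check is that the two routes impose the same conservation and pairing constraints, which holds because $\delta$ is a covering subgraph (it has the same vertices and half-edges as $\gamma$ and as $\Gamma$), so contraction of $\delta$ and restriction to $\gamma$ act on disjoint parts of the data and commute. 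Once this square commutes the three factors agree summand by summand, the two iterated convolutions coincide, and $\circledast$ is associative.
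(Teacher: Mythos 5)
Your proof is correct and follows essentially the same route as the paper's: expand both iterated convolutions, reindex via the bijection $\bar\gamma \mapsto \bar\alpha = \bar\gamma/\bar\delta$ inherited from the coassociativity proof, push $\pi_{\gamma,\Gamma}$ and $i_{\Gamma,\delta}$ inside the products using that they are algebra morphisms, and reduce the summand-by-summand matching to the same two commutative squares of momentum-space maps (together with the transitivity $\pi_{\gamma,\Gamma}\circ\pi_{\delta,\gamma}=\pi_{\delta,\Gamma}$, which the paper uses implicitly). The only difference is one of care: you actually verify the mixed square $\Cal F_{\gamma,\delta}\circ\Cal G_{\gamma,\Gamma}=\Cal G_{\gamma/\delta,\,\Gamma/\delta}\circ\Cal F_{\Gamma,\delta}$ on edge sets, whereas the paper merely asserts that its two diagrams commute.
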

\begin{proof}
Let $ \chi , \eta$ and $\xi$ be three elements  of $\wt {\Cal L} ( \wt{\Cal H}_{\Cal T}, \Cal B)$ and $(\Gamma , \underline{i})$ a specified graph. We denote indifferently $\bar\Gamma = (\Gamma,\underline{i})$, $\bar\gamma = (\gamma,\underline{j})$, $\bar\delta = (\delta,\underline{k})$ and $\bar\Gamma/\bar\gamma = (\Gamma/(\gamma,\underline j), \underline{i})$. First, we have:
\begin{eqnarray*}
\chi \circledast (\eta \circledast \xi) (\Gamma , \underline{i})&=&\chi \circledast (\eta \circledast \xi) (\Gamma)\\
&=& \sum_{\bar\delta \subset \; \bar\Gamma \atop \bar\Gamma / \bar\delta \in \Cal T} \pi_{\delta, \Gamma} [\chi (\delta)] \;i_{\Gamma, \delta} \Big[ (\eta \circledast \xi)(\bar\Gamma/\bar\delta) \Big]\\
&=& \sum_{\bar\delta \subset \; \bar\Gamma \atop \bar\Gamma / \bar\delta \in \Cal T} \pi_{\delta, \Gamma} [\chi (\delta)] \;i_{\Gamma, \delta} \Big[ \sum_{\bar\alpha \subset \; \bar\Gamma/\delta \atop (\bar\Gamma / \bar\delta)/ \bar\alpha  \in \Cal T} \pi_{\alpha, \Gamma/\delta} [\eta (\alpha)] i_{\Gamma/\delta, \alpha} [ \xi((\bar\Gamma/\bar\delta )/\bar\alpha) ] \Big].
\end{eqnarray*}
By identifying $\bar\alpha$ with $\bar\gamma/\bar\delta$ where $\bar\gamma$ is a subgraph of $\bar\Gamma$ containing $\bar\delta$, and $(\bar\Gamma /\bar \delta )/ \bar\alpha$ with $\bar\Gamma / \bar\gamma$ we obtain: 
\begin{eqnarray*}
\chi \circledast (\eta \circledast \xi) (\Gamma , \underline{i})&=&\chi \circledast (\eta \circledast \xi) (\Gamma)\\
&=& \sum_{\bar\delta \subset \; \bar\gamma \subset \; \bar\Gamma \atop \bar\gamma / \bar\delta \;;\; \bar\Gamma / \bar\gamma \in \Cal T} \pi_{\delta, \Gamma} [\chi (\delta)] \;i_{\Gamma, \delta} \Big[ \pi_{\gamma/\delta, \Gamma/\delta} [\eta (\bar\gamma/\bar\delta)] i_{\Gamma/\delta, \gamma/\delta} [ \xi(\bar\Gamma/\bar\gamma )] \Big]\\
&=& \sum_{\bar\delta \subset \; \bar\gamma \subset \; \bar\Gamma \atop \bar\gamma / \bar\delta \;;\; \bar\Gamma / \bar\gamma \in \Cal T} \pi_{\delta, \Gamma} [\chi (\delta)] \;i_{\Gamma, \delta} \pi_{\gamma/\delta, \Gamma/\delta} [\eta (\bar\gamma/\bar\delta)] i_{\Gamma, \delta} i_{\Gamma/\delta, \gamma/\delta} [ \xi(\bar\Gamma/\bar\gamma )].
\end{eqnarray*}
Secondly we have:
\begin{eqnarray*}
(\chi \circledast \eta) \circledast \xi (\Gamma , \underline{i})&=&(\chi \circledast \eta) \circledast \xi (\Gamma)\\
&=&\sum_{\bar\gamma \subset \; \bar\Gamma  \atop \bar\Gamma / \bar\gamma \in \Cal T} \pi_{\gamma, \Gamma} [ \chi \circledast \xi(\gamma)] \;i_{\Gamma, \gamma} [ \xi(\bar\Gamma/\bar\gamma)]\\
&=& \sum_{\bar\delta \subset \; \bar\gamma \subset \; \bar\Gamma \atop \bar\gamma / \bar\delta \;;\; \bar\Gamma / \bar\gamma \in \Cal T} \pi_{\gamma, \Gamma} \Big[ \pi_{\delta, \gamma} \chi (\delta) i_{\delta, \gamma} [ \eta(\bar\gamma/\bar\delta)] \Big] \;i_{\Gamma, \gamma} [ \xi(\bar\Gamma/\bar\gamma)]\\
&=& \sum_{\bar\delta \subset \; \bar\gamma \subset \; \bar\Gamma \atop \bar\gamma / \bar\delta \;;\; \bar\Gamma / \bar\gamma \in \Cal T} \pi_{\gamma, \Gamma}  \pi_{\delta, \gamma} [\chi (\delta)] \pi_{\gamma, \Gamma} i_{\delta, \gamma} [ \eta(\bar\gamma/\bar\delta)] \;i_{\Gamma, \gamma} [ \xi(\bar\Gamma/\bar\gamma)].
\end{eqnarray*}
The two following diagrams commute: 
\diagramme{
\xymatrix{
W_{\Gamma} \ar[d]_{\Cal G_{\gamma,\Gamma}} \ar[r]^{\Cal F_{\Gamma,\delta}}
	&W_{\Gamma/\delta} \ar[d]^{\Cal G_{\gamma/\delta,\Gamma/\delta}}	\\
W_\gamma \ar[r]_{\Cal F_{\gamma,\delta}}& W_{\gamma/\delta}}
\hskip 20mm
\xymatrix{
& W_{\Gamma/\delta} \ar[d]^{\Cal F_{\Gamma/\delta,\gamma/\delta}} \\
 W_{\Gamma} \ar[ru]^{\Cal F_{\Gamma,\delta}} \ar[r]_{\Cal F_{\Gamma,\gamma}} & W_{\Gamma/\gamma} }
 }
From the two preceding diagrams we obtain the following two commutative diagrams:
\diagramme{
\xymatrix{
V_{\gamma/\delta} \ar[d]_{\pi_{\gamma/\delta , \Gamma/\delta}} \ar[r]^{i_{\gamma,\delta}}
	&V_\gamma \ar[d]^{\pi_{\gamma , \Gamma}}	\\
V_{\Gamma/\delta} \ar[r]_{i_{\Gamma,\delta}}& V_{\Gamma}}
\hskip 20mm
\xymatrix{
& V_{\Gamma/\delta} \ar[d]^{\pi_{\Gamma ,\delta}} \\
V_{\Gamma/\gamma} \ar[ru]^{i_{\Gamma/\delta ,\gamma/\delta}} \ar[r]_{i_{\Gamma ,\gamma}} & V_{\Gamma}}
} 
Hence we can write:
\begin{eqnarray*}
(\chi \circledast \eta )\circledast \xi (\Gamma , \underline{i}) &=& \sum_{\bar\delta \subset \; \bar\gamma \subset \; \bar\Gamma \atop \bar\gamma / \bar\delta \;;\; \bar\Gamma / \bar\gamma \in \Cal T} \pi_{\delta, \Gamma} [\chi (\delta)] \;i_{\Gamma, \delta} \pi_{\gamma/\delta, \Gamma/\delta} [\eta (\bar\gamma/\bar\delta)] i_{\Gamma, \delta} i_{\Gamma/\delta, \gamma/\delta} [ \xi(\bar\Gamma/\bar\gamma )].
\end{eqnarray*}
Consequently, as $(\chi \circledast \eta )\circledast \xi (\Gamma , \underline{i})  = \chi \circledast (\eta \circledast \xi) (\Gamma , \underline{i})$ for any specified graph $(\Gamma , \underline{i})$, the product $\circledast$ is associative.
\end{proof}
\begin{theorem}
Let $G = \{ \varphi \in \wt {\Cal L} ( \wt{\Cal H}_{\Cal T}, \Cal B) \;\;\text{such that}\;\; \varphi (\gamma \gamma') = \varphi (\gamma) \bullet \varphi (\gamma') \text{ and } \varphi (\un) = \un_\Cal B \}$. Equipped with the product $\circledast$, the set $G$ is a subgroup of the semigroup of characters of $\wt{\Cal H}_{\Cal T}$ with values in $\Cal B$. 
\end{theorem}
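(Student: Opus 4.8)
The plan is to verify the three group axioms relative to the semigroup of characters, using that associativity of $\circledast$ has already been established. Concretely, I must (a) produce a two-sided neutral element lying in $G$, (b) show $G$ is closed under $\circledast$, i.e. that $\chi\circledast\eta$ is again a unital character satisfying conditions (1)--(3), and (c) construct a $\circledast$-inverse inside $G$ for every $\varphi\in G$. I would begin with the unit. Define $e\in\wt{\Cal L}(\wt{\Cal H}_{\Cal T},\Cal B)$ by $e(\bar\Gamma)=\un_{V_\Gamma}$ when $\Gamma$ has no internal edge and $e(\bar\Gamma)=0$ otherwise; this is just $\un_{\Cal B}\,\varepsilon$ with $\varepsilon$ the counit. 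It manifestly satisfies (1)--(3), is multiplicative for $\bullet$, and sends $\un$ to $\un_{\Cal B}$, so $e\in G$. To see it is neutral, inspect which terms of the convolution sum survive: in $(\varphi\circledast e)(\bar\Gamma)$ the factor $e(\bar\Gamma/\bar\gamma)$ is nonzero only when $\Gamma/\gamma$ has no internal edge, forcing $\gamma=\Gamma$, and then $\pi_{\Gamma,\Gamma}=i_{\Gamma,\Gamma}=\mathrm{id}$ returns $\varphi(\bar\Gamma)$; symmetrically, in $(e\circledast\varphi)(\bar\Gamma)$ only $\gamma=\mop{sk}\Gamma$ survives, and since $\Gamma/\mop{sk}\Gamma\cong\Gamma$ this returns $\varphi(\bar\Gamma)$.

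For closure I would exploit the multiplicative structure of disjoint unions. The key combinatorial facts are that the specified covering subgraphs of a concatenation $\Gamma_1\Gamma_2$ are exactly the concatenations $\gamma_1\gamma_2$, that contraction distributes over disjoint union, $(\Gamma_1\Gamma_2)/(\gamma_1\gamma_2)=(\Gamma_1/\gamma_1)(\Gamma_2/\gamma_2)$, and that the algebra morphisms are compatible with $\bullet$, i.e.
$$\pi_{\gamma_1\gamma_2,\,\Gamma_1\Gamma_2}(x_1\bullet x_2)=\pi_{\gamma_1,\Gamma_1}(x_1)\bullet\pi_{\gamma_2,\Gamma_2}(x_2),\qquad i_{\Gamma_1\Gamma_2,\,\gamma_1\gamma_2}(y_1\bullet y_2)=i_{\Gamma_1,\gamma_1}(y_1)\bullet i_{\Gamma_2,\gamma_2}(y_2),$$
which follow from $W_{\Gamma_1\Gamma_2}=W_{\Gamma_1}\times W_{\Gamma_2}$ and the definitions of $\Cal F$ and $\Cal G$. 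Multiplying out $(\chi\circledast\eta)(\Gamma_1)\bullet(\chi\circledast\eta)(\Gamma_2)$ and invoking Proposition \ref{concat} to interchange the pointwise product in each $V_{\Gamma_k}$ with $\bullet$, the double sum reorganizes into the single sum over product subgraphs defining $(\chi\circledast\eta)(\Gamma_1\Gamma_2)$. Conditions (1)--(3) are inherited: independence of the specification and the value $\un$ on graphs with no internal edge are immediate, while $(\chi\circledast\eta)(\Gamma)\in V_\Gamma$ because $\pi_{\gamma,\Gamma}$ and $i_{\Gamma,\gamma}$ take values in $V_\Gamma$.

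For inverses I would use a recursion on the loop number $L$, anchored by condition (3). Setting $\varphi^{\circledast-1}(\bar\Gamma)=\un_{V_\Gamma}$ on graphs with no internal edge, and isolating in $(\varphi\circledast\varphi^{\circledast-1})(\bar\Gamma)=e(\bar\Gamma)=0$ the term $\gamma=\mop{sk}\Gamma$ (which contributes exactly $\varphi^{\circledast-1}(\bar\Gamma)$ since $\varphi(\mop{sk}\Gamma)=\un$ and $\Gamma/\mop{sk}\Gamma\cong\Gamma$), one is forced to define
$$\varphi^{\circledast-1}(\bar\Gamma)=-\!\!\sum_{{\bar\gamma\subset\bar\Gamma,\ \bar\gamma\neq\mop{sk}\Gamma\atop \bar\Gamma/\bar\gamma\in\Cal T}}\!\!\pi_{\gamma,\Gamma}\big[\varphi(\gamma)\big]\,i_{\Gamma,\gamma}\big[\varphi^{\circledast-1}(\bar\Gamma/\bar\gamma)\big].$$
Every $\Gamma/\gamma$ occurring has strictly fewer internal edges than $\Gamma$, so the recursion is well founded and produces a right inverse. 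One then checks that $\varphi^{\circledast-1}$ again satisfies (1)--(3) and is multiplicative, by induction on $L$ using commutativity of $\bullet$, Proposition \ref{concat}, and the compatibility of $\pi,i$ with $\bullet$ established above; this is the standard fact that the convolution inverse of a character valued in a commutative algebra is a character. Hence $\varphi^{\circledast-1}\in G$, it in turn admits a right inverse, and the monoid argument with associativity shows the left and right inverses coincide, giving a genuine two-sided inverse in $G$.

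The main obstacle is step (b) together with the multiplicativity check in step (c): both hinge on the compatibility of the morphisms $\pi_{\gamma,\Gamma}$ and $i_{\Gamma,\gamma}$ with the unordered concatenation $\bullet$, and on keeping precise track of the specifications $\underline j$ and the contractions as subgraphs and graphs are split along connected components. Once that bookkeeping is in place, the reduction of products of convolution sums to single sums over product subgraphs is mechanical, and the remaining verifications are routine.
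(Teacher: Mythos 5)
Your proof is correct, and for the unit and for the closure of $G$ under $\circledast$ it follows the paper's argument essentially verbatim: the same neutral element $e=\un_{\Cal B}\,\varepsilon$, the same inspection of which terms of the convolution sum survive ($\gamma=\Gamma$ on one side, $\gamma=\mop{sk}\Gamma$ on the other), and the same reduction of $(\varphi\circledast\psi)(\bar\Gamma\bar\Gamma')$ using the factorization of covering subgraphs of a concatenation, the compatibility of $\pi_{\gamma,\Gamma}$ and $i_{\Gamma,\gamma}$ with $\bullet$, and Proposition \ref{concat}. Where you genuinely diverge is the construction of inverses. The paper takes the locally finite Neumann series $\varphi^{\circledast-1}=\sum_n(e-\varphi)^{\circledast n}$, which terminates at $n=q$ on a graph of degree $q$ and is manifestly a two-sided inverse; you instead derive the Bogoliubov-style recursion obtained by isolating the $\gamma=\mop{sk}\Gamma$ term in $\varphi\circledast\varphi^{\circledast-1}=e$ (in the spirit of the recursions \eqref{bbbb}--\eqref{bbbbbbbbb1} used later for the Birkhoff decomposition), which a priori yields only a right inverse, so you need the additional monoid argument to make it two-sided. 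Unrolling your recursion reproduces the paper's truncated series, so the two constructions give the same element; the series is slicker on two-sidedness, your recursion is more explicit and well-founded for the same reason (contracting a subgraph with an internal edge strictly decreases degree). One point in your favor: for $G$ to be a group one must also know that $\varphi^{\circledast-1}$ is again multiplicative, hence in $G$; you address this explicitly (induction on degree, using commutativity of $\bullet$ and Proposition \ref{concat}), whereas the paper never verifies it --- its series is built from the non-multiplicative map $e-\varphi$, so membership in $G$ is not automatic from the formula. Your write-up thus closes a small gap the paper leaves implicit, though you only sketch that multiplicativity check rather than carry it out in full.
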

\begin{proof}
Let $\varphi$, $\psi$ be two elements of $G$ and $\bar\Gamma = (\Gamma , i)$, $\bar\Gamma' = (\Gamma' , i')$ two specified graphs:
It is clear that by definition:  $\varphi \circledast \psi \in \wt {\Cal L} ( \wt{\Cal H}_{\Cal T}, \Cal B)$. Using Proposition \ref{concat} we have then: 
\begin{eqnarray*}
(\varphi \circledast \psi) (\bar\Gamma \bar\Gamma') &=&(\varphi \circledast \psi) (\Gamma \Gamma')\\
&=& \sum_{\bar\gamma \bar\gamma' \subset \; \bar\Gamma \bar{\Gamma'} \atop \bar\Gamma \bar\Gamma' / \bar\gamma\bar\gamma' \in \Cal T} \pi_{\gamma\gamma', \Gamma\Gamma'} [ \varphi (\gamma \gamma')] \; i_{\Gamma\Gamma', \gamma\gamma'} \Big[ \psi ( \bar\Gamma \bar\Gamma'/ \bar\gamma \bar\gamma')\Big]\\  
&=& \sum_{\bar\gamma  \subset\; \bar\Gamma \;,\;\bar\gamma'  \subset\; \bar\Gamma' \atop \bar\Gamma / \bar\gamma \;;\; \bar\Gamma' / \bar\gamma' \in \Cal T} \Big ( \pi_{\gamma, \Gamma}[ \varphi (\gamma)] \bullet \pi_{\gamma', \Gamma'}[ \varphi (\gamma')] \Big) \; \Big ( i_{\Gamma, \gamma} \big[ \psi (\bar\Gamma/ \bar\gamma)\big] \bullet i_{\Gamma', \gamma'} \Big[\psi (\bar\Gamma'/ \bar\gamma') \Big] \Big)\\ 
&=& \sum_{\bar\gamma  \subset\; \bar\Gamma \;,\;\bar\gamma'  \subset\; \bar\Gamma' \atop \bar\Gamma / \bar\gamma \;;\; \bar\Gamma' / \bar\gamma' \in \Cal T} \Big( \pi_{\gamma, \Gamma}[ \varphi (\gamma)] i_{\Gamma, \gamma} \big[ \psi (\bar\Gamma/ \bar\gamma)\big] \Big) \bullet \Big( \pi_{\gamma', \Gamma'}[ \varphi (\gamma')]\; i_{\Gamma', \gamma'} \big[\psi (\bar\Gamma'/ \bar\gamma') \big] \Big)\\
&=&(\varphi \circledast \psi) (\bar\Gamma) (\varphi \circledast \psi) (\bar\Gamma').
\end{eqnarray*}
The identity element $e$ is defined by: 
\begin{equation}
e(\bar\Gamma) = \left\lbrace
\begin{array}{lcl}
\un_{V_\Gamma}  \;\;\;\;\; \text{if} \;\; \bar\Gamma \;\; \text{is a specified graph of degree zero}\\
0  \;\; \;\; \;\; \;\; \text{if not}.
\end{array}\right. 
\end{equation}
Indeed for any $\varphi \in \wt {\Cal L} ( \wt{\Cal H}_{\Cal T}, \Cal B)$ we have:\\
If $\bar\Gamma$ of degree zero,
$$ (e \circledast \varphi)(\bar\Gamma) =e(\bar\Gamma)\varphi(\bar\Gamma) = \varphi(\Gamma)$$
Similarly:   $$ (\varphi \circledast e)(\bar\Gamma) =\varphi(\bar\Gamma)e(\bar\Gamma) = \varphi(\Gamma)$$
If $\bar\Gamma$ of degree $\geq 1$ we have:
\begin{eqnarray*}
(e \circledast \varphi)(\bar\Gamma) &=& \sum_{(\bar\Gamma)} \pi_{\gamma, \Gamma} [ e (\bar\gamma )] \; i_{\Gamma, \gamma} [\varphi (\bar\Gamma/ \bar\gamma)]\\
&=&\pi_{\smop{sk}\Gamma, \Gamma} [ \un_{V_{\smop{sk}\Gamma}}] \; i_{\Gamma, \smop{sk}\Gamma} [\varphi ( \Gamma )] \\
&=& \varphi (\Gamma ).
\end{eqnarray*}
\begin{eqnarray*}
(\varphi \circledast e)(\bar\Gamma) &=& \sum_{(\Gamma)} \pi_{\gamma, \Gamma} [ \varphi (\gamma )] \; i_{\Gamma, \gamma} [e (\bar\Gamma/ \bar\gamma)]\\
&=&\pi_{\Gamma, \Gamma} [\varphi (\Gamma )] \; i_{\Gamma, \Gamma} [e ( \mop{sk}\bar\Gamma )] \\
&=& \varphi (\Gamma ).
\end{eqnarray*}
The inverse of an element $\varphi$ of $G$ is given by the following formula:
\begin{eqnarray*}
\varphi^{\circledast -1} (\bar\Gamma) &=&( e - (e-\varphi))^{\circledast -1} (\bar\Gamma)\\ 
&=&\sum_{n} (e-\varphi)^{\circledast n} (\bar\Gamma).
\end{eqnarray*}
This sum is well defined: it stops at $n=q$ for specified graph $\bar\Gamma$ of degree $q$. Then we have: 
$$\varphi^{\circledast-1} \circledast \varphi = \varphi\circledast \varphi^{\circledast -1} = e.$$
\end{proof}
\subsection {Birkhoff decomposition} 
In this section we will explain how to renormalize a character $ \varphi$ of the specified graphs graded bigebra $\wt{\Cal H}_{\Cal T}$: Let $\varphi$ be a character  with values in the unitary commutative algebra ${\Cal A} : = \Cal B [z^{-1} , z]] $ equipped with the minimal subtraction scheme:
\begin{equation}
{\Cal A} = {\Cal A}_- \oplus {\Cal A}_+
\end{equation}
where: $$ {\Cal A}_+ : =   \Cal B [[ z ]],$$
       $$ {\Cal A}_- : =  z^{-1} \Cal B [z^{-1}].$$  

${\Cal A}_-$ and ${\Cal A}_+$ are two subalgebras of ${\Cal A}$, with  $\textbf{1}_{{\Cal A}} \in {\Cal A}_+$. We denote by $P$ the projection on ${\Cal A_-}$ parallel to ${\Cal A}_+$. The space of linear maps of $\wt{\Cal H}_{\Cal T}$ to ${\Cal A}$ is equipped with the convolution product $\circledast$ defined by the formula \eqref{conv}.
We have verified in the previous paragraph that the space of characters $\wt{\Cal H}_{\Cal T}$ with values in ${\Cal A}$ is a group for the convolution product $\circledast$.
\begin{theorem}
\begin{enumerate}
\item Any character $ \varphi \in G$ has a unique Birkhoff decomposition in $G$ : 
\begin{equation}
\varphi = \varphi _-^{\circledast-1} \circledast \varphi_+
\end{equation}
compatible with the renormalization scheme chosen, in other words, such that $\varphi_+$ takes its values in ${\Cal A}_+ $ and such that $ \varphi_- (\bar\Gamma) \in {\Cal A}_- $ for any specified graph $(\Gamma , \underline{i}) $ of degree $\geq 1$. The components $\varphi_+$ and $\varphi_- $ are given by simple recursive formulas: for any $\bar\Gamma$ of degree zero (i.e without internal edges) we put: $\varphi _-(\bar\Gamma) = \varphi _+(\bar\Gamma) = \varphi (\bar\Gamma)= \un_{V_\Gamma}$. If we assume that $ \varphi_- (\bar\Gamma) $ and $\varphi_+ (\bar\Gamma) $ are known for $ \bar\Gamma $ of degree $ k \leq n-1 $, we have then for any specified graph $\bar\Gamma $ of degree $n$:
\begin{equation}
\varphi_- (\bar\Gamma) = \varphi_- (\Gamma) = - P \Big( \varphi (\Gamma) + \sum_{\bar\gamma \subsetneq\; \bar\Gamma\atop \bar\Gamma / \bar\gamma \in \Cal T} \pi_{\gamma, \Gamma} [\varphi_- (\gamma)] \;i_{\Gamma, \gamma} \Big[ \varphi \left( \Gamma / (\gamma ,\underline{j})\right) \Big ]\Big) \label{bbbb}
\end{equation}
\begin{equation}
\varphi_+ (\bar\Gamma) =\varphi_+ (\Gamma) = (I- P) \Big( \varphi (\Gamma) + \sum_{\bar\gamma \subsetneq\; \bar\Gamma \atop \bar\Gamma / \bar\gamma \in \Cal T} \pi_{\gamma, \Gamma} [\varphi_- (\gamma)] \;i_{\Gamma, \gamma} \Big[ \varphi \left( \Gamma / (\gamma ,\underline{j})\right) \Big ]\Big). \label{bbbbbbbbb1}
\end{equation}
\item $\varphi_+$ and $\varphi_-$ are two characters. We will call $\varphi_+$ the renormalized character and $\varphi_-$ the character of the counterterms.
\end{enumerate}
\end{theorem}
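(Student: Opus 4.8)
\emph{The plan} is to transpose the classical Connes--Kreimer argument to the graded connected bialgebra $\wt{\Cal H}_{\Cal T}$ equipped with the convolution product $\circledast$. Write $\beta(\bar\Gamma)$ for the Bogoliubov preparation
$$\beta(\bar\Gamma) = \varphi(\Gamma) + \sum_{\bar\gamma \subsetneq\; \bar\Gamma\atop \bar\Gamma / \bar\gamma \in \Cal T} \pi_{\gamma, \Gamma} [\varphi_- (\gamma)] \;i_{\Gamma, \gamma} \big[ \varphi ( \Gamma / (\gamma ,\underline{j})) \big],$$
so that the recursion of the statement reads $\varphi_-(\bar\Gamma) = -P\beta(\bar\Gamma)$ and $\varphi_+(\bar\Gamma) = (I-P)\beta(\bar\Gamma)$. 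First I would observe that this recursion is well posed: since $\wt{\Cal H}_{\Cal T}$ is graded by the loop number and connected, every proper covering subgraph $\bar\gamma \subsetneq \bar\Gamma$ has $L(\gamma) < L(\Gamma)$, so $\varphi_-(\gamma)$ is already defined when $\varphi_-(\bar\Gamma)$ is computed. The prescribed target conditions then hold by construction, because $P$ projects onto $\Cal A_-$ and $I-P$ onto $\Cal A_+$; and uniqueness subject to these constraints is immediate from the directness of $\Cal A = \Cal A_- \oplus \Cal A_+$, which forces the two components of $\beta(\bar\Gamma)$ degree by degree. The base case is the assignment $\varphi_-(\bar\Gamma) = \varphi_+(\bar\Gamma) = \un_{V_\Gamma}$ in degree zero.

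Next I would establish the factorization $\varphi = \varphi_-^{\circledast-1} \circledast \varphi_+$. Expanding $(\varphi_- \circledast \varphi)(\bar\Gamma)$ over all covering subgraphs and using $\varphi_-(\mop{sk}\bar\Gamma) = \un$ together with $\varphi(\mop{res}\bar\Gamma) = \un_{V_\Gamma}$, the extremal terms produce exactly $\varphi(\Gamma)$ (the skeleton) and $\varphi_-(\bar\Gamma)$ (the full subgraph), whence $\beta(\bar\Gamma) = (\varphi_- \circledast \varphi)(\bar\Gamma) - \varphi_-(\bar\Gamma)$ for every specified graph of degree $\geq 1$. Therefore
$$\varphi_+(\bar\Gamma) = (I-P)\beta(\bar\Gamma) = \beta(\bar\Gamma) + \varphi_-(\bar\Gamma) = (\varphi_- \circledast \varphi)(\bar\Gamma),$$
so $\varphi_+ = \varphi_- \circledast \varphi$ identically. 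The desired identity then follows once $\varphi_-$ is invertible in $G$, its inverse being the geometric series $\sum_n (e-\varphi_-)^{\circledast n}$ as in the previous theorem.

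The heart of the matter, and the step I expect to be the main obstacle, is part (2): that $\varphi_-$ (hence $\varphi_+$) is a character. I would argue by induction on the degree, the essential structural input being that the minimal subtraction splitting $\Cal A = \Cal A_- \oplus \Cal A_+$ is a splitting into two \emph{subalgebras}, so that $P$ is an idempotent Rota--Baxter operator of weight $-1$:
$$P(a)P(b) = P\big(aP(b)\big) + P\big(P(a)b\big) - P(ab).$$
Assuming $\varphi_-$ multiplicative in all degrees $< n$, I would compute $\beta(\bar\Gamma\bar\Gamma')$ for a product of total degree $n$, using that the coproduct is an algebra morphism: the covering subgraphs of $\bar\Gamma\bar\Gamma'$ are exactly the products $\bar\gamma\bar\gamma'$ with $\bar\gamma \subset \bar\Gamma$ and $\bar\gamma' \subset \bar\Gamma'$, and Proposition~\ref{concat}, together with the multiplicativity of $\pi_{\gamma,\Gamma}$ and $i_{\Gamma,\gamma}$, factors the sum into a $\bullet$-product of the two sides. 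The inductive hypothesis then yields the semi-multiplicativity relation
$$\beta(\bar\Gamma\bar\Gamma') = \beta(\bar\Gamma)\bullet\varphi_+(\bar\Gamma') + \varphi_-(\bar\Gamma)\bullet\beta(\bar\Gamma').$$
Writing $a = \beta(\bar\Gamma)$, $b = \beta(\bar\Gamma')$, so that $\varphi_-(\bar\Gamma) = -P(a)$ and $\varphi_+(\bar\Gamma') = b - P(b)$, applying $-P$ to this relation and invoking the Rota--Baxter identity collapses the right-hand side precisely to $P(a)\bullet P(b) = \varphi_-(\bar\Gamma)\bullet\varphi_-(\bar\Gamma')$, proving $\varphi_-(\bar\Gamma\bar\Gamma') = \varphi_-(\bar\Gamma)\bullet\varphi_-(\bar\Gamma')$. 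Once $\varphi_- \in G$, the identity $\varphi_+ = \varphi_- \circledast \varphi$ exhibits $\varphi_+$ as a convolution of two characters, hence a character by the group theorem, which completes the proof. The delicate point throughout is the bookkeeping of the cross-terms in $\beta(\bar\Gamma\bar\Gamma')$ and the verification that the extremal (skeleton and full) contributions reassemble correctly into $\varphi_+(\bar\Gamma')$ and $\varphi_-(\bar\Gamma)$.
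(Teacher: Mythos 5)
Your proposal is correct, and its skeleton coincides with the paper's: define $\varphi_\pm$ by the recursion, read off $\varphi_+=\varphi_-\circledast\varphi$ from the two extremal terms (skeleton and full graph) of the convolution, and prove the character property by induction on degree via the Rota--Baxter property of $P$. Two sub-steps are organized genuinely differently, and both of your variants are sound. For uniqueness, the paper argues group-theoretically: from two admissible decompositions it forms $\varphi_+\circledast\psi_+^{\circledast-1}=\varphi_-\circledast\psi_-^{\circledast-1}$, whose values lie in $\Cal A_+$ on one side and in $\Cal A_-$ (in positive degree) on the other, forcing both to equal $e$; you instead let the splitting $\Cal A=\Cal A_-\oplus\Cal A_+$ fix the two components of $\beta(\bar\Gamma)$ degree by degree. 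Your route is more elementary, since it does not need $\Cal A_\pm$-valuedness to survive convolution and inversion (which silently uses that $\Cal A_\pm$ are subalgebras); to be complete, though, you should state that a competing decomposition is first rewritten as $\psi_+=\psi_-\circledast\varphi$ — the same extremal-term manipulation as in your factorization step. For part (2), you isolate a lemma the paper never states, the semi-multiplicativity of the Bogoliubov preparation $\beta(\bar\Gamma\bar\Gamma')=\beta(\bar\Gamma)\bullet\varphi_+(\bar\Gamma')+\varphi_-(\bar\Gamma)\bullet\beta(\bar\Gamma')$, and then apply $-P$ and the Rota--Baxter identity once; this identity does follow, as you indicate, from the factorization of covering subgraphs of a product, Proposition \ref{concat}, the identity $\varphi_+=\varphi_-\circledast\varphi$, and the induction hypothesis (equivalently it reads $\beta(\bar\Gamma\bar\Gamma')=\varphi_+(\bar\Gamma)\bullet\varphi_+(\bar\Gamma')-\varphi_-(\bar\Gamma)\bullet\varphi_-(\bar\Gamma')$). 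The paper instead expands both $\varphi_-(\Gamma)\bullet\varphi_-(\Gamma')$ (through Rota--Baxter) and $\varphi_-(\Gamma\Gamma')$ (through the explicit coproduct of $\bar\Gamma\bar\Gamma'$) into one and the same sum over pairs of subgraphs and matches them term by term — exactly the cross-term bookkeeping you flag as delicate and defer. Your packaging yields a shorter, structurally transparent induction; the paper's yields a concrete verification that the skeleton and residue cross-terms recombine. Finally, your observation that proper locally $1PI$ covering subgraphs have strictly smaller loop number, so that the recursion is well-founded, is a point the paper leaves tacit.
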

\begin{proof}
\begin{enumerate}
\item The fact that $\varphi_+$ takes its values in ${\Cal A}_+$ and that $\varphi_- (\Gamma) \in {\Cal A}_-$ is immediate by definition of $P$, and we can verify by a simple calculation that $\varphi_+ = \varphi _- \circledast \varphi$:
\begin{eqnarray*}
\varphi_+ (\Gamma) &=& (I- P) \Big( \varphi (\Gamma) + \sum_{\bar\gamma \subsetneq\; \bar\Gamma \atop \bar\Gamma / \bar\gamma \in \Cal T} \pi_{\gamma, \Gamma} [\varphi_- (\gamma)] \;i_{\Gamma, \gamma} \Big[ \varphi \left( \Gamma / (\gamma ,\underline{j})\right) \Big ]\Big) \\
&=& \varphi (\Gamma) + \varphi_- (\Gamma) + \sum_{\bar\gamma \subsetneq\; \bar\Gamma \atop \bar\Gamma / \bar\gamma \in \Cal T} \pi_{\gamma, \Gamma} [\varphi_- (\gamma)] \;i_{\Gamma, \gamma} \Big[ \varphi \left( \Gamma / (\gamma ,\underline{j})\right) \Big ].
\end{eqnarray*}
By using the fact that $\varphi _-(\Gamma) = \varphi (\Gamma)= \un_{V_\Gamma}$, for any graph $\Gamma$ of degree zero we have: 
\begin{eqnarray*}
\varphi_- \circledast \varphi  (\Gamma) &=&  \sum_{\bar\gamma \subset\; \bar\Gamma \atop \bar\Gamma / \bar\gamma \in \Cal T} \pi_{\gamma, \Gamma} [\varphi_- (\gamma)] \;i_{\Gamma, \gamma} \Big[ \varphi \left( \Gamma / (\gamma ,\underline{j})\right) \Big ]\\
&=& \pi_{\smop{sk}\Gamma,\Gamma} [\varphi_- (\mop{sk}\Gamma)] i_{\Gamma, \smop{sk}\Gamma} [\varphi (\Gamma)]  + \pi_{\Gamma, \Gamma} [ \varphi_- (\Gamma)] i_{\Gamma, \Gamma} [\varphi ( \mop{res}\Gamma)]\\
&+& \sum_{\bar\gamma \subsetneq\; \bar\Gamma \atop \bar\Gamma / \bar\gamma \in \Cal T} \pi_{\gamma, \Gamma} [\varphi_- (\gamma)] \; i_{\Gamma, \gamma} \Big[ \varphi \left( \Gamma / (\gamma ,\underline{j})\right) \Big ]\\
&=& \varphi (\Gamma) + \varphi_- (\Gamma) + \sum_{\bar\gamma \subsetneq\; \bar\Gamma \atop \bar\Gamma / \bar\gamma \in \Cal T} \pi_{\gamma, \Gamma} [\varphi_- (\gamma)] \; i_{\Gamma, \gamma} \Big[ \varphi \left( \Gamma / (\gamma ,\underline{j})\right) \Big ].
\end{eqnarray*}
Hence: $\varphi_+ = \varphi_- \circledast \varphi$ is equivalent to saying that: $\varphi = \varphi _-^{\circledast-1} \circledast \varphi_+$.\\
We now assume that $\varphi = \varphi _-^{\circledast-1} \circledast \varphi_+ = \psi _-^{\circledast-1} \circledast \psi_+$. Thus we obtain:
$$  \varphi _+ \circledast \psi_+^{\circledast-1} = \varphi _- \circledast \psi_-^{\circledast-1}.$$
The right-hand side of the equality sends any specified graph of degree $\geq1$ in ${\Cal A}_+$ but the left-hand side sends in ${\Cal A}_-$, then for any graph $\bar\Gamma$ of degree $\geq1$ we have:
   $$  \varphi _+ \circledast \psi_+^{\circledast-1}(\Gamma) = \varphi _- \circledast \psi_-^{\circledast-1}(\Gamma) = 0.$$
Then: $\varphi _+ \circledast \psi_+^{\circledast-1} = \varphi _- \circledast \psi_-^{\circledast-1} = e$, which proves the uniqueness of the Birkhoff decomposition.
\item We will just prove that $\varphi_-$ is a character. Then  $\varphi_+ = \varphi_- \circledast \varphi$ is also a character. The idea follows from the fact that the projection $P$ satisfies the Rota-Baxter equality:  
\begin{equation}\label{Rb}
P (a)P (b) = P \Big( -ab +  P (a)b + P (b) a \Big).
\end{equation}
Let $\varphi$ be an element of $G$. The proof is obtained by induction on the degree of the graph $\Gamma\Gamma'$. For $\bar\Gamma\bar\Gamma'$ of degree zero we have $\un_{V_\Gamma}\bullet\un_{V_\Gamma'}=\un_{V_{\Gamma\Gamma'}}$. We assume that $\varphi_- (\Gamma \Gamma') = \varphi_- (\Gamma)\bullet\varphi_- (\Gamma')$ for any $ \bar\Gamma$,$\bar\Gamma'  \in  \wt{\Cal H}_{\Cal T}$ such that: $|\bar\Gamma| + |\bar\Gamma'| \leq d-1$ and show the equality for $\bar\Gamma$, $\bar\Gamma'  \in  \wt{\Cal H}_{\Cal T}$ such that: $|\bar\Gamma| + |\bar\Gamma'| = d$, where $|\bar\Gamma|$ denotes the degree of $\bar\Gamma$.\\
We have :
$$\varphi_- (\Gamma)\bullet\varphi_- (\Gamma') = P(X)\bullet P(Y),$$
where:
$$ X = \varphi (\Gamma) + \sum_{\bar\gamma \subsetneq\; \bar\Gamma \atop \bar\Gamma / \bar\gamma \in \Cal T} \pi_{\gamma, \Gamma} [\varphi_- (\gamma)] \; i_{\Gamma, \gamma} \Big[ \varphi \left( \Gamma / (\gamma ,\underline{j})\right) \Big ]$$ 
$$Y = \varphi (\Gamma') + \sum_{\bar\gamma' \subsetneq\; \bar\Gamma' \atop \bar\Gamma' / \bar\gamma' \in \Cal T} \pi_{\gamma', \Gamma'} [\varphi_- (\gamma')] \; i_{\Gamma', \gamma'} \Big[ \varphi \left( \Gamma' / (\gamma' ,\underline{j})\right) \Big ].$$
We have: $$  \varphi_- (\Gamma)\bullet\varphi_- (\Gamma') = P(X)\bullet P(Y) = P \Big( -X \bullet Y +  P (X)\bullet Y +X \bullet P (Y) \Big).$$
Since  $P(X) = - \varphi_- (\Gamma)$ and $P(Y) = - \varphi_- (\Gamma')$, we obtain:
$$  \varphi_- (\Gamma)\bullet\varphi_- (\Gamma') = -P \Big(  X \bullet Y + \varphi_- (\Gamma)\bullet Y + X \bullet \varphi_- (\Gamma') \Big).$$
Therefore: 
\begin{eqnarray*}
\varphi_- (\Gamma)\bullet\varphi_- (\Gamma') &=&  - P \Big[\varphi (\Gamma)\bullet \varphi (\Gamma') + \varphi_- (\Gamma) \bullet\varphi (\Gamma') +\varphi (\Gamma) \bullet\varphi_- (\Gamma')\\
&+& \sum_{\bar\gamma \subsetneq\; \bar\Gamma \atop \bar\Gamma / \bar\gamma \in \Cal T} \Big(\pi_{\gamma, \Gamma}  [ \varphi_- ( \gamma )] \; i_{\Gamma, \gamma} \big[\varphi \left(\bar\Gamma / \bar\gamma \right)\big]\Big) \bullet \Big(\varphi_- (\Gamma') + \varphi (\Gamma') \Big) \\
&+& \sum_{\bar\gamma' \subsetneq\; \bar\Gamma' \atop \bar\Gamma' / \bar\gamma' \in \Cal T} \Big( \pi_{\gamma', \Gamma'} [ \varphi_- ( \gamma' )] \; i_{\Gamma', \gamma'} \big[\varphi ( \bar\Gamma' / \bar\gamma' )\big]\Big) \bullet\Big( \varphi_- (\Gamma) + \varphi (\Gamma)\Big) \\
&+&\sum_{\bar\gamma \subsetneq\; \bar\Gamma \;,\; \bar\gamma' \subsetneq\; \bar\Gamma' \atop \bar\Gamma / \bar\gamma \;;\; \bar\Gamma' / \bar\gamma' \in \Cal T} \Big(\pi_{\gamma, \Gamma}  [ \varphi_- ( \gamma )] \;i_{\Gamma, \gamma} \big[ \varphi (\bar\Gamma / \bar\gamma) \big]\Big)\bullet\Big( \pi_{\gamma', \Gamma'} [ \varphi_- ( \gamma' )] \;  i_{\Gamma', \gamma'} \big[\varphi (\bar\Gamma' / \bar\gamma') \big]\Big) \Big].
\end{eqnarray*}
The coproduct $\Delta(\bar\Gamma \bar\Gamma')$ is given by:
\begin{eqnarray*}
\Delta (\bar\Gamma \bar\Gamma') &=&  \bar\Gamma \bar\Gamma' \otimes \mop{res}\bar\Gamma\mop{res}\bar\Gamma' +  \mop{sk}\bar\Gamma\mop{sk}\bar\Gamma' \otimes \bar\Gamma \bar\Gamma' + \bar\Gamma\mop{sk}\bar\Gamma'\otimes \bar\Gamma'\mop{res}\bar\Gamma + \bar\Gamma'\mop{sk}\bar\Gamma\otimes \bar\Gamma\mop{res}\bar\Gamma'\\
&+& \sum_{\bar\gamma \subsetneq\; \bar\Gamma \atop \bar\Gamma / \bar\gamma \in \Cal T } \bar\gamma\bar\Gamma' \otimes (\bar\Gamma / \bar\gamma) \mop{res}\bar\Gamma' + \bar\gamma \mop{sk}\bar\Gamma'\otimes (\bar\Gamma / \bar\gamma) \bar\Gamma' \\
&+&\sum_{\bar\gamma' \subsetneq\; \bar\Gamma' \atop \bar\Gamma' / \bar\gamma' \in \Cal T} \bar\Gamma\bar\gamma' \otimes (\bar\Gamma' / \bar\gamma') \mop{res}\bar\Gamma + \bar\gamma'\mop{sk}\bar\Gamma \otimes\bar\Gamma (\bar\Gamma' / \bar\gamma') + \sum_{\bar\gamma \subsetneq \bar\Gamma\;;\;\bar\gamma' \subsetneq\; \bar\Gamma' \atop \bar\Gamma / \bar\gamma \;;\; \bar\Gamma' / \bar\gamma' \in \Cal T} \bar\gamma\bar\gamma' \otimes (\bar\Gamma/\bar\gamma) (\bar\Gamma'/\bar\gamma').
\end{eqnarray*}
Since \;$ \varphi_-(\Gamma \Gamma') = -P \Big( \varphi_- \circledast \varphi (\Gamma \Gamma') - \varphi_-(\Gamma \Gamma') \Big)$, we have: 
\begin{eqnarray*}
\varphi_- (\Gamma \Gamma') &=& - P \Big[ \pi_{\Gamma\Gamma', \Gamma\Gamma'}[ \varphi_-(\Gamma\Gamma')] i_{\Gamma\Gamma', \Gamma\Gamma'} [\varphi (\mop{res}\bar\Gamma\mop{res}\bar\Gamma')] \\
&+& \pi_{\smop{sk}\Gamma\smop{sk}\Gamma',\Gamma\Gamma'} [ \varphi_- (\mop{sk}\bar\Gamma\mop{sk}\bar\Gamma')] i_{\Gamma\Gamma',\smop{sk}\Gamma\smop{sk}\Gamma'} [\varphi (\Gamma\Gamma')] \\
&+& \pi_{\Gamma\smop{sk}\Gamma',\Gamma\Gamma'} [ \varphi_- (\Gamma\mop{sk}\bar\Gamma' )] i_{\Gamma\Gamma',\Gamma\smop{sk}\Gamma'}\big[ \varphi( \Gamma'\mop{res}\bar\Gamma)\big]\\
&+&\pi_{\Gamma'\smop{sk}\Gamma,\Gamma\Gamma'}[ \varphi_- (\Gamma'\mop{sk}\bar\Gamma )] i_{\Gamma\Gamma',\Gamma'\smop{sk}\Gamma}[\varphi(\Gamma \mop{res}\bar\Gamma' )] \\
&+& \hskip-5mm\sum_{\bar\gamma \subsetneq\; \bar\Gamma \atop \bar\Gamma / \bar\gamma \in \Cal T}  \pi_{\gamma\gamma',\Gamma\Gamma'} [\varphi_- (\gamma \Gamma')] i_{\Gamma\Gamma',\gamma\gamma'}\big[ \varphi(\bar\Gamma / \bar\gamma\mop{res}\bar\Gamma' )\big] +  \pi_{\gamma\gamma',\Gamma\Gamma'} [\varphi_- (\gamma \mop{sk}\bar\Gamma')] i_{\Gamma\Gamma',\gamma\gamma'}\big[ \varphi(\bar\Gamma / \bar\gamma\Gamma' )\big]\\
&+&\hskip-5mm\sum_{\bar\gamma' \subsetneq\; \bar\Gamma' \atop \bar\Gamma' / \bar\gamma' \in \Cal T} \hskip-2mm\pi_{\gamma\gamma',\Gamma\Gamma'} [\varphi_- (\gamma' \Gamma)] i_{\Gamma\Gamma',\gamma\gamma'}\big[ \varphi(\bar\Gamma' / \bar\gamma'\mop{res}\bar\Gamma )\big] +  \pi_{\gamma\gamma',\Gamma\Gamma'} [\varphi_- (\gamma' \mop{sk}\bar\Gamma)] i_{\Gamma\Gamma',\gamma\gamma'}\big[ \varphi(\bar\Gamma'/\bar\gamma' \Gamma )\big]\\
&+& \sum_{\bar\gamma \subsetneq \bar\Gamma\;;\;\bar\gamma' \subsetneq\; \bar\Gamma' \atop \bar\Gamma / \bar\gamma \;;\; \bar\Gamma' / \bar\gamma' \in \Cal T} \pi_{\gamma\gamma',\Gamma\Gamma'} [\varphi_-(\gamma\gamma')] i_{\Gamma\Gamma',\gamma\gamma'}\big[ \varphi(\bar\Gamma/\bar\gamma \bar\Gamma'/\bar\gamma')\big]  - \varphi_-(\Gamma\Gamma') \Big].
\end{eqnarray*}
We notice that the first and last terms in the right side cancel each other. Since $\varphi$ is a character, $\varphi(\mop{sk}\bar\Gamma)= \varphi_-(\mop{sk}\bar\Gamma) = \un_{V_\Gamma}$ and by the induction hypothesis we obtain:
\begin{eqnarray*}
\varphi_- (\Gamma\Gamma') &=& -P \Big[ \varphi (\Gamma)\bullet \varphi (\Gamma')\\
&+&\Big( \pi_{\Gamma\smop{sk}\Gamma',\Gamma\Gamma'} [ \varphi_- (\Gamma)]\bullet \pi_{\Gamma\smop{sk}\Gamma',\Gamma\Gamma'} [ \varphi_- (\mop{sk}\bar\Gamma' )]\Big)  \Big( i_{\Gamma\Gamma',\Gamma\smop{sk}\Gamma'}\big[ \varphi( \Gamma')\big] \bullet i_{\Gamma\Gamma',\Gamma\smop{sk}\Gamma'}\big[ \varphi( \mop{res}\bar\Gamma)\big] \Big)\\
&+&\Big( \pi_{\Gamma'\smop{sk}\Gamma,\Gamma\Gamma'}[ \varphi_- (\Gamma')]\bullet \pi_{\Gamma'\smop{sk}\Gamma,\Gamma\Gamma'}[ \varphi_- (\mop{sk}\bar\Gamma )] \Big)  \Big(i_{\Gamma\Gamma',\Gamma'\smop{sk}\Gamma}[\varphi(\Gamma )] \bullet i_{\Gamma\Gamma',\Gamma'\smop{sk}\Gamma}[\varphi(\mop{res}\bar\Gamma' )]  \Big) \\
&+& \sum_{\bar\gamma \subsetneq\; \bar\Gamma\atop \bar\Gamma / \bar\gamma \in \Cal T} \Big( \pi_{\gamma,\Gamma} [\varphi_- (\gamma)] i_{\Gamma,\gamma}\big[ \varphi(\bar\Gamma/\bar\gamma)\big]\Big)\bullet \Big( \varphi_- (\Gamma') + \varphi (\Gamma')\Big) \\
&+&\sum_{\bar\gamma' \subsetneq\; \bar\Gamma'\atop \bar\Gamma' / \bar\gamma' \in \Cal T} \Big(\pi_{\gamma',\Gamma'} [ \varphi_- (\gamma' )] i_{\Gamma',\gamma'}\big[ \varphi( \bar\Gamma'/\bar\gamma')\big]\Big) \bullet\Big( \varphi_- (\Gamma) + \varphi (\Gamma) \Big)\\
&+&\sum_{\bar\gamma \subsetneq \bar\Gamma\;;\;\bar\gamma' \subsetneq\; \bar\Gamma' \atop \bar\Gamma / \bar\gamma \;;\; \bar\Gamma' / \bar\gamma' \in \Cal T} \Big(\pi_{\gamma, \Gamma}  [ \varphi_- ( \gamma )] \bullet \pi_{\gamma', \Gamma'} [ \varphi_- ( \gamma' )] \Big) \Big( i_{\Gamma, \gamma} \big[ \varphi ( \bar\Gamma / \bar\gamma) \big] \bullet  i_{\Gamma', \gamma'} \big[\varphi (\bar\Gamma'/\bar\gamma') \big]\Big) \Big].
\end{eqnarray*}
By using the proposition \ref{concat} we can write:
\begin{eqnarray*}
\varphi_- (\Gamma\Gamma') &=& - P \Big[\varphi (\Gamma)\bullet \varphi (\Gamma') + \varphi_- (\Gamma) \bullet\varphi (\Gamma') +\varphi (\Gamma) \bullet\varphi_- (\Gamma')\\
&+& \sum_{\bar\gamma \subsetneq\; \bar\Gamma \atop \bar\Gamma / \bar\gamma \in \Cal T} \Big(\pi_{\gamma, \Gamma}  [ \varphi_- ( \gamma )] \; i_{\Gamma, \gamma} \big[\varphi \left(\bar\Gamma / \bar\gamma \right)\big]\Big) \bullet \Big(\varphi_- (\Gamma') + \varphi (\Gamma') \Big) \\
&+& \sum_{\bar\gamma' \subsetneq\; \bar\Gamma' \atop \bar\Gamma' / \bar\gamma' \in \Cal T} \Big( \pi_{\gamma', \Gamma'} [ \varphi_- ( \gamma' )] \; i_{\Gamma', \gamma'} \big[\varphi ( \bar\Gamma' / \bar\gamma' )\big]\Big) \bullet\Big( \varphi_- (\Gamma) + \varphi (\Gamma)\Big) \\
&+&\sum_{\bar\gamma \subsetneq\; \bar\Gamma \;,\; \bar\gamma' \subsetneq\; \bar\Gamma' \atop \bar\Gamma / \bar\gamma \;;\;\bar\Gamma' / \bar\gamma' \in \Cal T} \Big(\pi_{\gamma, \Gamma}  [ \varphi_- ( \gamma )] \;i_{\Gamma, \gamma} \big[ \varphi (\bar\Gamma / \bar\gamma) \big]\Big)\bullet\Big( \pi_{\gamma', \Gamma'} [ \varphi_- ( \gamma' )] \;  i_{\Gamma', \gamma'} \big[\varphi (\bar\Gamma' / \bar\gamma') \big]\Big) \Big]\\
&=&\varphi_- (\Gamma)\bullet\varphi_- (\Gamma'),
\end{eqnarray*}
which shows that $\varphi_- $ is a character.
\end{enumerate}
\end{proof}
\subsection {Taylor expansions} 
We adapt here a construction from \cite[\S 9]{EGP} also used by \cite[\S 3.7]{EP}, (see also \cite{KP, KP2}). 
\begin{definition}
Let $\Cal B$ be the commutative algebra defined by \eqref{calB}. For $m \in \mathbb N$ the order $m$ Taylor expansion operator is:
\begin{equation}
P_m \in \mop{End}(\Cal B),  \;\;\;\; P_m f  (v) := \sum_{|\beta| \leq m} \frac{v^\beta}{\beta!}\partial_0^\beta f,\label{pm}
\end{equation}
where $\beta = (\beta_1, ..., \beta_n) \in {\mathbb N}^n$ with the usual notations $\beta \leq \alpha$ iff $\beta_i \leq \alpha_i$  for all $i$, $|\beta|:= \beta_1 + ...+ \beta_n$ as well as 
$$ v^\beta = \prod_{1 \leq k \leq n} v_k^{\beta_k} , \;\;\;\;\;\;\;\;\; \beta! := \prod_{1 \leq k \leq n} \beta_k! ,\;\;\; \;\;\;\;\;\; \partial_0^\beta := \prod_{1 \leq k \leq n} \frac{\partial^{\beta_k}}{\partial v_k^{\beta_k}}_{|v_k =0}.$$ 	
\end{definition}
We can now implement the general momentum scheme using these projections $P_m$. Let $\wt{\Cal H}_{\Cal T} = \bigoplus_n \wt{\Cal H}_{\Cal T, n}$ be the specified Feynman graphs graded bialgebra: we define a Birkhoff decomposition:
\begin{equation}
\varphi = \varphi _-^{\circledast-1} \circledast \varphi_+.
\end{equation}
The components $\varphi_+$ and $\varphi_- $ are given by simple recursive formulas: for any $\bar\Gamma$ of degree zero (i.e without internal edges) we put: $\varphi _-(\bar\Gamma) = \varphi _+(\bar\Gamma) = \varphi (\bar\Gamma)= \un_{V_\Gamma}$. If we assume that $ \varphi_- (\bar\Gamma) $ and $\varphi_+ (\bar\Gamma) $ are known for $ \bar\Gamma $ of degree $ k \leq m-1 $, we have then for any specified graph $\bar\Gamma $ of degree $m$:
\begin{equation}
\varphi_- (\bar\Gamma) = - P_m \Big( \varphi (\Gamma) + \sum_{\bar\gamma \subsetneq\; \bar\Gamma\atop \bar\Gamma / \bar\gamma \in \Cal T} \pi_{\gamma, \Gamma} [\varphi_- (\gamma)] \;i_{\Gamma, \gamma} \Big[ \varphi \left( \Gamma / (\gamma ,\underline{j})\right) \Big ]\Big) \label{b1}
\end{equation}
\begin{equation}
\varphi_+ (\bar\Gamma) = (I- P_m) \Big( \varphi (\Gamma) + \sum_{\bar\gamma \subsetneq\; \bar\Gamma \atop \bar\Gamma / \bar\gamma \in \Cal T} \pi_{\gamma, \Gamma} [\varphi_- (\gamma)] \;i_{\Gamma, \gamma} \Big[ \varphi \left( \Gamma / (\gamma ,\underline{j})\right) \Big ]\Big). \label{bb1}
\end{equation}
The operators $P_m$ form a \textsl{Rota--Baxter family} in the sense of K. Ebrahimi-Fard, J. Gracia-Bondia and F. Patras \cite[Proposition 9.1, Proposition 9.2]{EGP}: the analogue of the Rota-Baxter equality defined by the formula \eqref{Rb} is given by following theorem \cite{EGP, EP}:
\begin{theorem}
Let $\Gamma$ be a graph, and let $f, g \in V_\Gamma$. The Taylor expansion operators fulfil for any $s, t \in \mathbb N$:
\begin{equation}
 ( P_s f ) (P_t g) = P_{s+t} [ (P_s f)g + f (P_t g) - fg]. \label{bb}
\end{equation} 
\end{theorem}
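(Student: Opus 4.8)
The plan is to derive \eqref{bb} from a single purely algebraic identity in the commutative algebra $V_\Gamma$, together with two elementary facts about the operators $P_m$, so that all the multi-index bookkeeping collapses to one application of the Leibniz rule. First I would record the identity, valid for arbitrary $f,g\in V_\Gamma$,
\begin{equation}
(P_s f)\,g + f\,(P_t g) - fg = (P_s f)(P_t g) - (f - P_s f)(g - P_t g),
\end{equation}
which is verified by expanding the right-hand side and cancelling the two copies of $(P_s f)(P_t g)$. Applying the linear operator $P_{s+t}$ to both sides, and in view of this identity, the theorem reduces to the two claims $P_{s+t}\bigl[(P_s f)(P_t g)\bigr] = (P_s f)(P_t g)$ and $P_{s+t}\bigl[(f - P_s f)(g - P_t g)\bigr] = 0$.

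For the first claim I would use the general remark that $P_m h = h$ whenever $h$ is a polynomial in $v$ of total degree $\le m$, which is immediate from the definition \eqref{pm} since the order-$m$ Taylor polynomial of such an $h$ is $h$ itself. By \eqref{pm}, $P_s f$ is a polynomial whose monomials all have total degree $\le s$ and $P_t g$ one whose monomials have total degree $\le t$; hence every monomial of $(P_s f)(P_t g)$ has total degree $\le s+t$, and $P_{s+t}$ fixes it.

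For the second claim I would set $u := f - P_s f$ and $w := g - P_t g$ and note that these are \emph{flat} at the origin to orders $s$ and $t$ respectively, meaning $\partial_0^\alpha u = 0$ for all $|\alpha|\le s$ and $\partial_0^\beta w = 0$ for all $|\beta|\le t$; this holds because the Taylor polynomial $P_s f$ matches all partial derivatives of $f$ at $0$ up to order $s$, and likewise for $P_t g$. The crux is that flatness is additive under products, which I would extract from the Leibniz rule
\begin{equation}
\partial_0^\gamma(uw) = \sum_{\alpha+\beta=\gamma} \binom{\gamma}{\alpha}\,\partial_0^\alpha u \;\partial_0^\beta w.
\end{equation}
For any multi-index $\gamma$ with $|\gamma|\le s+t$ and any splitting $\alpha+\beta=\gamma$, one cannot have both $|\alpha|>s$ and $|\beta|>t$, since that would force $|\gamma|=|\alpha|+|\beta|>s+t$; hence in each summand either $\partial_0^\alpha u=0$ or $\partial_0^\beta w=0$, and so $\partial_0^\gamma(uw)=0$ for all $|\gamma|\le s+t$. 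Reading \eqref{pm} once more gives $P_{s+t}(uw)=0$, which is precisely the second claim. Combining the two claims with the algebraic identity yields \eqref{bb}.

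I would expect the only genuine subtlety to be this flatness-is-additive step, namely ruling out any term of the Leibniz expansion that escapes both vanishing conditions; the argument above disposes of it by the simple degree count $|\alpha|+|\beta|=|\gamma|\le s+t$. Everything else is formal manipulation inside the commutative algebra $V_\Gamma$ and a direct reading of the definition \eqref{pm}, so no further estimates or analytic input are needed.
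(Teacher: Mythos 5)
Your proof is correct, and it takes a genuinely different route from the paper's, even though both rest on the same elementary facts. The paper reduces \eqref{bb} to the equality of all derivatives $\partial_0^\alpha$, $|\alpha|\le s+t$, of the two sides: it expands $\partial_0^\alpha\big[(P_sf)g+f(P_tg)-fg\big]$ by the Leibniz rule and then invokes the projection identity \eqref{fr} (namely $\partial_0^\beta P_s=\partial_0^\beta$ for $|\beta|\le s$ and $0$ otherwise) in a case analysis over the splittings $(\beta,\alpha-\beta)$, the contributions with $|\beta|>s$ or $|\alpha-\beta|>t$ cancelling among the three terms, so that the Leibniz sum collapses to $\partial_0^\alpha\big[(P_sf)(P_tg)\big]$. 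You instead rewrite the argument of $P_{s+t}$ via the factorization $(P_sf)g+f(P_tg)-fg=(P_sf)(P_tg)-(f-P_sf)(g-P_tg)$ and prove two independent claims: $P_{s+t}$ fixes $(P_sf)(P_tg)$, a polynomial of total degree $\le s+t$, and annihilates $(f-P_sf)(g-P_tg)$, a product of functions flat at the origin to orders $s$ and $t$. The computational core is shared — the Leibniz rule plus the degree count that a splitting $\alpha+\beta=\gamma$ with $|\gamma|\le s+t$ cannot have both $|\alpha|>s$ and $|\beta|>t$, which is exactly what \eqref{fr} encodes — but your packaging is more structural: it exhibits \eqref{bb} as the family analogue of the standard proof of the Rota--Baxter identity \eqref{Rb} for the minimal subtraction scheme, where one uses that both the image and the kernel of the projection $P$ are subalgebras; here the images of $P_s$ and $P_t$ multiply into the fixed points of $P_{s+t}$, and the remainders multiply into its kernel. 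What the paper's version buys is a single self-contained chain of equalities with all constants explicit; what yours buys is modularity and a transparent reason why the identity holds, making the parallel between the Taylor scheme and minimal subtraction conceptually evident.
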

\begin{proof}
Denote by $\mu (f\otimes g)= fg$ the pointwise product on $V_\Gamma$. Using the Leibniz rule: 
\begin{equation}
\partial \circ \mu = \mu \circ (\partial \otimes Id + Id \otimes \partial),
\end{equation}
and the formula 
\begin{equation}\label{fr}
\partial_0^\alpha P_s = \partial_0^\alpha \sum_{|\beta| \leq s} \frac{v \mapsto v^\beta}{\beta!}\partial_0^\beta = \sum_{|\beta| \leq s} \frac{\partial_0^\alpha(v \mapsto v^\beta)}{\beta!}\partial_0^\beta = \left\lbrace
\begin{array}{lcl}
\partial_0^\alpha  \;\;\;\;\; \text{if} \;\; |\alpha| \leq s\\
0  \;\; \;\; \;\; \;\; \text{else},
\end{array}\right. 
\end{equation}
by the formula \eqref{pm} it suffices to check for any multiindex $|\alpha| \leq s+t$ that:
\begin{eqnarray*}
\partial_0^\alpha[ (P_s f)g + f (P_t g) - fg] =  \sum_{\beta \leq \alpha}\binom{\alpha}{\beta} \mu \circ (\partial_0^\beta \otimes \partial_0^{\alpha -\beta})[ (P_s f)\otimes g + f \otimes(P_t g) - f \otimes g]\\
= \sum_{\beta \leq \alpha}\binom{\alpha}{\beta} \big[ (\partial_0^\beta P_s f) (\partial_0^{\alpha -\beta} g) + (\partial_0^\beta f) (\partial_0^{\alpha -\beta} P_t g) - (\partial_0^\beta f) (\partial_0^{\alpha -\beta} g)\big]\\
=\sum_{\beta \leq \alpha}\binom{\alpha}{\beta}  (\partial_0^\beta P_s f) (\partial_0^{\alpha -\beta} P_t g) = \partial_0^\alpha [(P_s f).P_t g)].
\end{eqnarray*}
Here we used that in the middle line, by formula \eqref{fr} the contributions with $|\beta| > s$ or $|\alpha - \beta| > t$ give zero. For example, if $|\alpha - \beta| > t$ then  $|\alpha| - |\beta| > t \Longrightarrow |\beta| < |\alpha| - t$, since $|\alpha| \leq s+t$  then  $|\beta| < s$ such that:
$$ \partial_0^\beta P_s = \partial_0^\beta, \;\;\;\;\text{and}\;\;\;\;  \partial_0^{\alpha -\beta} P_t = 0,$$
then 
$$
\underbrace{(\partial_0^\beta P_s f)}_{\partial_0^\beta f} (\partial_0^{\alpha -\beta} g) + (\partial_0^\beta f) \underbrace{(\partial_0^{\alpha -\beta} P_tg)}_{0} - (\partial_0^\beta f) (\partial_0^{\alpha -\beta} g) = 0,$$
and  
$$(\partial_0^{\beta} P_s f) (\partial_0^{\alpha -\beta} P_t g) = 0.$$
Hence only terms with $|\beta| \leq s$ and $|\alpha - \beta| \leq t$ remain, we obtain:
$$ \partial_0^\beta P_s = \partial_0^\beta ,\;\;\;\; \text{and}\;\;\;\; \partial_0^{\alpha -\beta} P_t = \partial_0^{\alpha -\beta},$$
then we have:
\begin{eqnarray*}
(\partial_0^\beta P_s f)(\partial_0^{\alpha -\beta} g) + (\partial_0^\beta f) (\partial_0^{\alpha -\beta} P_tg) - (\partial_0^\beta f) (\partial_0^{\alpha -\beta} g) &=& (\partial_0^\beta f) (\partial_0^{\alpha -\beta} g)\\
&=& (\partial_0^{\beta} P_s f) (\partial_0^{\alpha -\beta} P_t g).
\end{eqnarray*}
\end{proof}
\begin{theorem}
Let $\wt{\Cal H}_{\Cal T}$ be the specified graphs graded bigebra and $\varphi$ be a character  with values in the unitary commutative algebra $\Cal B$. Further let $P_.: \mathbb N \longrightarrow \mop{End}(\Cal B)$ be an indexed renormalization scheme, that is a family $(P_t)_{t \in \mathbb N}$ of endomorphisms such that:
\begin{equation}\label{fam}
 \mu \circ (P_s \otimes P_t) = P_{s+t} \circ \mu \circ [P_s \otimes Id + Id \otimes P_t - Id \otimes Id],
\end{equation}  
for all $s, t \in \mathbb N$. Then the two maps $\varphi_-$ and $\varphi_+$ defined by \eqref{b1}  and \eqref{bb1}  are two characters.
\end{theorem}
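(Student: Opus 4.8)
The plan is to follow verbatim the strategy of the Birkhoff decomposition theorem for the minimal subtraction scheme proved above, replacing the single Rota--Baxter identity \eqref{Rb} by the indexed family identity \eqref{fam} and carefully tracking the degree--dependence of the operators $P_m$. As in that proof, it suffices to show that $\varphi_-$ is a character: once this is established, $\varphi_+ = \varphi_- \circledast \varphi$ is a $\circledast$--product of two characters and hence a character as well (recall that $G$ is stable under $\circledast$).

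The one genuinely new ingredient is a $\bullet$--version of \eqref{fam}. For a graph $\Gamma$ of degree $s$ and a graph $\Gamma'$ of degree $t$ one has $W_{\Gamma\Gamma'} = W_\Gamma \times W_{\Gamma'}$, so a multi-index on $W_{\Gamma\Gamma'}$ splits as $\alpha = (\beta, \gamma)$ and $\partial_0^\alpha (f \bullet g) = (\partial_0^\beta f)\bullet(\partial_0^\gamma g)$ for $f\in V_\Gamma$, $g\in V_{\Gamma'}$. Expanding in the monomial basis and using \eqref{fr} exactly as in the proof of \eqref{bb}, I would establish for $X \in V_\Gamma$ and $Y \in V_{\Gamma'}$ the relation
\begin{equation*}
P_s(X)\bullet P_t(Y) = P_{s+t}\big( P_s(X)\bullet Y + X \bullet P_t(Y) - X\bullet Y \big).
\end{equation*}
The verification reduces to the elementary fact that $\mathbf 1(|\beta|\le s) + \mathbf 1(|\gamma|\le t) - 1$ equals $1$ exactly when $|\beta|\le s$ and $|\gamma|\le t$, and vanishes on all remaining terms with $|\beta|+|\gamma|\le s+t$; this is the $\bullet$--analogue of \eqref{fam} for the two disjoint blocks of momenta.

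With this identity in hand I would prove $\varphi_-(\Gamma\Gamma') = \varphi_-(\Gamma)\bullet\varphi_-(\Gamma')$ by induction on the total degree $d = |\bar\Gamma| + |\bar\Gamma'| = |\bar\Gamma\bar\Gamma'|$, the grading being additive under concatenation. The degree--zero case is $\un_{V_\Gamma}\bullet\un_{V_{\Gamma'}} = \un_{V_{\Gamma\Gamma'}}$. For the inductive step, set $s = |\bar\Gamma|$, $t = |\bar\Gamma'|$, and let $X$, $Y$ be the bracketed expressions of \eqref{b1} for $\bar\Gamma$ and $\bar\Gamma'$, so that $\varphi_-(\Gamma) = -P_s(X)$, $\varphi_-(\Gamma') = -P_t(Y)$, and $\varphi_-(\Gamma)\bullet\varphi_-(\Gamma') = P_s(X)\bullet P_t(Y)$ is rewritten by the identity above. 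In parallel I would expand $\varphi_-(\Gamma\Gamma') = -P_{s+t}\big( \varphi_-\circledast\varphi(\Gamma\Gamma') - \varphi_-(\Gamma\Gamma')\big)$ using the coproduct $\Delta(\bar\Gamma\bar\Gamma')$ already computed in the minimal--subtraction proof, then match the two expressions under $P_{s+t}$ term by term, using that $\varphi$ is a character ($\varphi(\Gamma\Gamma') = \varphi(\Gamma)\bullet\varphi(\Gamma')$), that $\varphi(\mop{sk}\bar\Gamma) = \varphi_-(\mop{sk}\bar\Gamma) = \un_{V_\Gamma}$, the induction hypothesis on $\varphi_-$ in lower degrees, and Proposition \ref{concat} to interchange $\bullet$ with the pointwise product of the maps $\pi_{\gamma,\Gamma}$ and $i_{\Gamma,\gamma}$.

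The main obstacle is purely bookkeeping: one must check that the degrees align so that \eqref{fam} applies with the precise indices $(s,t,s+t)$, i.e. that every term of $\Delta(\bar\Gamma\bar\Gamma')$ factors into a degree--$s$ piece from $\bar\Gamma$ and a degree--$t$ piece from $\bar\Gamma'$, and that the outer projection $P_{s+t}$ splits, through the $\bullet$--family relation, into the $P_s$ and $P_t$ governing $\varphi_-(\Gamma)$ and $\varphi_-(\Gamma')$. Since the underlying coproduct and the collection of terms are literally those of the single--operator proof, no new graph combinatorics arises. Once the matching is complete we obtain $\varphi_-(\Gamma\Gamma') = \varphi_-(\Gamma)\bullet\varphi_-(\Gamma')$, so $\varphi_-$ is a character and therefore so is $\varphi_+ = \varphi_-\circledast\varphi$.
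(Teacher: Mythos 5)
Your proposal is correct and takes essentially the same route as the paper: reduce to showing $\varphi_-$ is a character, induct on the total degree, expand $\varphi_-(\Gamma\Gamma')$ through the coproduct of $\bar\Gamma\bar\Gamma'$ using Proposition \ref{concat} and the induction hypothesis, and close the argument by applying the indexed Rota--Baxter relation with the indices $\left(|\Gamma|,|\Gamma'|,|\Gamma|+|\Gamma'|\right)$. The only difference is bookkeeping: the $\bullet$-version of \eqref{fam} that you present as the ``new ingredient'' is, in the paper's reading, exactly the hypothesis \eqref{fam} itself (with $\mu$ the concatenation product of ${\Cal B}$), so your multi-index splitting argument is really a verification that the Taylor operators $P_m$ satisfy that hypothesis --- that is, the content of the paper's preceding theorem \eqref{bb}, transposed from the pointwise product on $V_\Gamma$ to the product $V_\Gamma\bullet V_{\Gamma'}\subset V_{\Gamma\Gamma'}$.
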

\begin{proof}
We will just prove that $\varphi_-$ is a character. Then  $\varphi_+ = \varphi_- \circledast \varphi$ is also a character. For $\bar\Gamma, \bar\Gamma' \in \mop{ker} \varepsilon$, we write $\varphi_- (\bar\Gamma) =  - P_{|\Gamma|}  ( \bar\varphi (\bar\Gamma) )$ where 
$$\bar\varphi (\bar\Gamma) = \varphi (\Gamma) + \sum_{\bar\gamma \subsetneq\; \bar\Gamma\atop \bar\Gamma / \bar\gamma \in \Cal T} \pi_{\gamma, \Gamma} [\varphi_- (\gamma)] \;i_{\Gamma, \gamma} \big[ \varphi \left( \Gamma / (\gamma ,\underline{j})\right) \big ].$$
For proving this theorem we use the formulas \eqref{b1} and \eqref{fam}.
\begin{eqnarray*}
\varphi_- (\bar\Gamma\bar\Gamma') &=& - P_{|\Gamma\Gamma'|} \Big[\varphi (\Gamma)\bullet \varphi (\Gamma') + \varphi_- (\Gamma) \bullet\varphi (\Gamma') +\varphi (\Gamma) \bullet\varphi_- (\Gamma')\\
&+& \sum_{\bar\gamma \subsetneq\; \bar\Gamma \atop \bar\Gamma / \bar\gamma \in \Cal T} \Big(\pi_{\gamma, \Gamma}  [ \varphi_- ( \gamma )] \; i_{\Gamma, \gamma} \big[\varphi \left(\bar\Gamma / \bar\gamma \right)\big]\Big) \bullet \Big(\varphi_- (\Gamma') + \varphi (\Gamma') \Big) \\
&+& \sum_{\bar\gamma' \subsetneq\; \bar\Gamma' \atop \bar\Gamma' / \bar\gamma' \in \Cal T} \Big( \pi_{\gamma', \Gamma'} [ \varphi_- ( \gamma' )] \; i_{\Gamma', \gamma'} \big[\varphi ( \bar\Gamma' / \bar\gamma' )\big]\Big) \bullet\Big( \varphi_- (\Gamma) + \varphi (\Gamma)\Big) \\
&+&\sum_{\bar\gamma \subsetneq\; \bar\Gamma \;,\; \bar\gamma' \subsetneq\; \bar\Gamma' \atop \bar\Gamma / \bar\gamma \;;\;\bar\Gamma' / \bar\gamma' \in \Cal T} \Big(\pi_{\gamma, \Gamma}  [ \varphi_- ( \gamma )] \;i_{\Gamma, \gamma} \big[ \varphi (\bar\Gamma / \bar\gamma) \big]\Big)\bullet\Big( \pi_{\gamma', \Gamma'} [ \varphi_- ( \gamma' )] \;  i_{\Gamma', \gamma'} \big[\varphi (\bar\Gamma' / \bar\gamma') \big]\Big) \Big]\\
&=&- P_{|\Gamma| + |\Gamma'|} \Big[ \Big(\varphi (\Gamma) +\sum_{\bar\gamma \subsetneq\; \bar\Gamma \atop \bar\Gamma / \bar\gamma \in \Cal T} \big(\pi_{\gamma, \Gamma}  [ \varphi_- ( \gamma )] \; i_{\Gamma, \gamma} \big[\varphi \left(\bar\Gamma / \bar\gamma \right)\big]\big) \Big)\\
&&\bullet \Big(\varphi (\Gamma') + \sum_{\bar\gamma' \subsetneq\; \bar\Gamma' \atop \bar\Gamma' / \bar\gamma' \in \Cal T} \big( \pi_{\gamma', \Gamma'} [ \varphi_- ( \gamma' )] \; i_{\Gamma', \gamma'} \big[\varphi ( \bar\Gamma' / \bar\gamma' )\big]\big) \Big)\\
&+&\varphi_- (\Gamma) \bullet \Big(\varphi (\Gamma') + \sum_{\bar\gamma' \subsetneq\; \bar\Gamma' \atop \bar\Gamma' / \bar\gamma' \in \Cal T} \big( \pi_{\gamma', \Gamma'} [ \varphi_- ( \gamma' )] \; i_{\Gamma', \gamma'} \big[\varphi ( \bar\Gamma' / \bar\gamma' )\big]\big)\Big)\\
&+& \varphi_- (\Gamma') \bullet \Big(\varphi (\Gamma) + \sum_{\bar\gamma \subsetneq\; \bar\Gamma \atop \bar\Gamma / \bar\gamma \in \Cal T} \big( \pi_{\gamma, \Gamma} [ \varphi_- (\gamma)] \; i_{\Gamma, \gamma} \big[\varphi ( \bar\Gamma / \bar\gamma)\big]\big) \Big)\Big]\\
&=&- P_{|\Gamma| + |\Gamma'|} \Big[\bar\varphi (\bar\Gamma)\bullet \bar\varphi (\bar\Gamma') - P_{|\Gamma|}(\bar\varphi (\bar\Gamma))\bullet\bar\varphi (\bar\Gamma') - P_{|\Gamma'|}(\bar\varphi (\bar\Gamma'))\bullet \bar\varphi (\bar\Gamma) \Big]\\
&=& P_{|\Gamma| + |\Gamma'|} \Big[ P_{|\Gamma'|}(\bar\varphi (\bar\Gamma'))\bullet \bar\varphi (\bar\Gamma) + P_{|\Gamma|}(\bar\varphi (\bar\Gamma))\bullet\bar\varphi (\bar\Gamma') - \bar\varphi (\bar\Gamma)\bullet \bar\varphi (\bar\Gamma')\Big]\\
&=& \big( P_{|\Gamma|}(\bar\varphi (\bar\Gamma))\big)\bullet \big( P_{|\Gamma'|}(\bar\varphi (\bar\Gamma'))\big)\\
&=& \varphi_- (\bar\Gamma)\bullet \varphi_- (\bar\Gamma').
\end{eqnarray*}
\end{proof}


\end{document}